\documentclass[leqno,11pt,a4paper]{amsart}
\pdfoutput=1

\usepackage[full]{textcomp}
\usepackage{newpxtext}
\usepackage{diagbox}
\usepackage{comment}
\usepackage{cabin}
\usepackage[varqu,varl]{inconsolata}
\usepackage[bigdelims,vvarbb]{newpxmath}
\usepackage[cal=cm,bb=esstix,bbscaled=1.05]{mathalfa}
\usepackage[margin=2.395cm]{geometry}
\usepackage{booktabs}
\usepackage{savesym}
\let\savedegree\bigtimes
\let\bigtimes\relax
\usepackage{mathabx}
\usepackage{amsmath}
\usepackage{graphicx}
\let\bigtimes\savedegree

\usepackage[dvipsnames]{xcolor}
\usepackage{tikz}
\usepackage{tikz-cd}
\usepackage{cite}
\usepackage{enumitem}
\usepackage{amsfonts}
\setlist[enumerate]{labelsep=*, leftmargin=1.5pc}
\setlist[enumerate]{label=\normalfont(\roman*), ref=\roman*}

\usepackage{aliascnt}

\newtheorem{thm}{Theorem}[section]

\newaliascnt{lemma}{thm}
\newtheorem{lemma}[lemma]{Lemma}
\aliascntresetthe{lemma}

\newaliascnt{cor}{thm}
\newtheorem{cor}[cor]{Corollary}
\aliascntresetthe{cor}

\newaliascnt{prop}{thm}
\newtheorem{prop}[prop]{Proposition}
\aliascntresetthe{prop}

\theoremstyle{definition}

\newaliascnt{example}{thm}
\newtheorem{example}[example]{Example}
\aliascntresetthe{example}

\newaliascnt{notation}{thm}
\newtheorem{notation}[notation]{Notation}
\aliascntresetthe{notation}

\newaliascnt{question}{thm}
\newtheorem{question}[question]{Question}
\aliascntresetthe{question}

\newaliascnt{conj}{thm}
\newtheorem{conj}[conj]{Conjecture}
\aliascntresetthe{conj}

\newaliascnt{remark}{thm}
\newtheorem{remark}[remark]{Remark}
\aliascntresetthe{remark}

\newaliascnt{definition}{thm}
\newtheorem{definition}[definition]{Definition}
\aliascntresetthe{definition}

\newtheorem*{definition*}{Definition}
\newtheorem*{notation*}{Notation}
\newtheorem*{organization*}{Organization}
\newtheorem*{theorem*}{Theorem}

\numberwithin{equation}{section}

\usepackage{hyperref}

\hypersetup{
    colorlinks=true,
    linkcolor={blue!90!black},
    citecolor={red!80!black},
    urlcolor={blue!50!black}
}

\usepackage[capitalise,nameinlink,noabbrev]{cleveref}

\crefname{thm}{Theorem}{Theorems}
\crefname{lemma}{Lemma}{Lemmas}
\crefname{cor}{Corollary}{Corollaries}
\crefname{prop}{Proposition}{Propositions}
\crefname{conj}{Conjecture}{Conjectures}
\crefname{definition}{Definition}{Definitions}
\crefname{remark}{Remark}{Remarks}
\crefname{example}{Example}{Examples}
\crefname{notation}{Notation}{Notations}

\crefname{innercustomthm}{Theorem}{Theorems}
\crefname{innercustomcor}{Corollary}{Corollaries}
\crefname{innercustomprop}{Proposition}{Propositions}
\crefname{innercustomconj}{Conjecture}{Conjectures}

\crefname{equation}{Equation}{Equations}
\crefname{section}{Section}{Sections}
\crefname{subsection}{Subsection}{Subsections}
\crefname{figure}{Figure}{Figures}
\crefname{table}{Table}{Tables}
\crefname{enumi}{Item}{Items}

\newcommand{\Gr}{\mathrm{Gr}}

\newcommand{\PP}{\mathbf{P}}

\newcommand{\mfm}{\mathfrak{m}}

\newcommand{\FF}{\mathscr{F}}
\newcommand{\OO}{\mathscr{O}}

\newcommand{\GG}{\mathbf{G}}

\newcommand{\XX}{\mathcal{X}}

\newcommand{\bfA}{\mathbf{A}}
\newcommand{\bfC}{\mathbb{C}}
\newcommand{\bfk}{\mathbb{K}}
\newcommand{\QQ}{\mathbb{Q}}

\DeclareMathOperator{\Proj}{Proj}
\DeclareMathOperator{\Spec}{Spec}

\begin{document}
%-------------------------------------------------------------------------------
\author[R.\,Ramkumar]{Ritvik~Ramkumar}
\address{Department of Mathematics\\Notre Dame\\South Bend, IN\\USA}
\email{rramkuma@nd.edu}
%-------------------------------------------------------------------------------
\keywords{}
\subjclass[2020]{}
%-------------------------------------------------------------------------------
\title{Hodge theory and Gr\"obner degenerations}

%-------------------------------------------------------------------------------
\begin{abstract} 
In this paper, we investigate the behavior of mixed Hodge structures under square-free Gr\"obner degenerations. We show that the weight-zero part of singular cohomology and the zeroth graded piece of the Hodge filtration are preserved under such degenerations,
complementing the homological rigidity established in the seminal work of Conca and Varbaro. 
We use these results to settle their conjecture concerning the singularities of homogeneous coordinate rings admitting such degenerations.
\end{abstract}

\maketitle

\section{introduction}

Let $S = \bfk[x_1,\dots,x_n]$ be a standard graded polynomial ring over a field $\bfk$ of characteristic $0$, and let $I \subseteq S$ be a homogeneous ideal. 
A term order on $S$ determines an initial ideal $J = \mathrm{in}(I)$ and a flat projective family $\XX \to \bfA^1$ with special fiber $\XX_0 = \Proj(S/J)$ and a trivialization 
$$
\XX|_{\GG_m} \cong \Proj(S/I) \times \GG_m.
$$

\begin{definition}The family $\XX \to \bfA^1$ is called a \textbf{Gr\"obner degeneration (over $\bfk$)}. If $J$ is radical, then we call the family a \textbf{square-free Gr\"obner degeneration}.
\end{definition}

Square-free Gr\"obner degenerations are fundamental in commutative algebra and algebraic geometry. 
Their usefulness stems in part from the fact that their special fibers are Stanley-Reisner schemes associated to simplicial complexes, allowing geometric and algebraic questions to be studied through combinatorial methods, while also providing geometric interpretations of combinatorial results.
A celebrated example is the work of Knutson and Miller, who constructed square-free Gr\"obner degenerations of matrix Schubert varieties with irreducible components of the special fiber indexed by reduced pipe dreams \cite{KM05}. 
These degenerations relate the multidegrees and $K$-polynomials of matrix Schubert varieties to pipe-dream combinatorics, giving geometric explanations of the Schubert and Grothendieck polynomials.
In their seminal work, Conca and Varbaro \cite{Conca_Varbaro} proved that square-free Gr\"obner degenerations also preserve the graded Hilbert functions of all local cohomology modules, and hence Cohen-Macaulayness and regularity.
In terms of sheaf cohomology, they proved

\begin{thm}[\cite{Conca_Varbaro}] \label{CV_thm} Let $\XX \to \bfA^1$ be a square-free Gr\"obner degeneration. Then for all $i,j$ we have 
$$
\dim_\bfk H^i(\XX_1,\OO_{\XX_1}(j)) =\dim_\bfk H^i(\XX_0,\OO_{\XX_0}(j)). 
$$
\end{thm}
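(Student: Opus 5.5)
The equality follows from the local-cohomology statement of Conca and Varbaro, namely $\dim_\bfk H^i_\mfm(S/I)_j = \dim_\bfk H^i_\mfm(S/J)_j$ for all $i,j$, where $\mfm$ is the homogeneous maximal ideal. We translate it into sheaf cohomology through the standard comparison between local cohomology and sheaf cohomology on $\Proj$. So the plan has three steps: prove (or cite) the local-cohomology rigidity, apply the comparison to $S/I$ and to $S/J$, and match the outcomes degree by degree.

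\textbf{Step 1 (the algebraic input).} Choose a weight vector $w$ with $\mathrm{in}_w(I)=J$ and form the Rees-type algebra $A = S[t]/\widetilde{I}$, where $\widetilde{I}$ is the $w$-homogenization of $I$. Then $A$ is flat over $\bfk[t]$, with $A/(t-1)\cong S/I$ and $A/tA\cong S/J$. Upper semicontinuity gives
$$\dim H^i_\mfm(S/J)_j \ge \dim H^i_\mfm(S/I)_j.$$
The reverse inequality is the hard part and uses square-freeness. Since $J$ is a Stanley--Reisner ideal, $S/J$ is cohomologically full. Concretely, one shows that for every $k$ the natural maps
$$H^i_\mfm(A/t^kA)\to H^i_\mfm(A/tA)$$
are surjective. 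This comes from the Frobenius-type splitting available in characteristic $p$, or from Du Bois properties of Stanley--Reisner rings in characteristic $0$. Surjectivity lets one lift classes along $t$, so $H^i_\mfm(A)$ has no $t$-torsion in a suitable sense. The long exact sequence for $0\to A\xrightarrow{t-1}A\to S/I\to 0$, combined with the one for multiplication by $t$, then forces equality of the graded dimensions. This is the main obstacle. Since it is exactly the content of \cite{Conca_Varbaro}, I would cite it rather than reprove it.

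\textbf{Step 2 (comparison).} For a finitely generated graded $S$-module $M$ with associated sheaf $\FF=\widetilde{M}$ on $\PP^{n-1}$, there is the exact sequence
$$0\to H^0_\mfm(M)_j\to M_j\to H^0(\FF(j))\to H^1_\mfm(M)_j\to 0,$$
and for $i\ge1$ there are isomorphisms $H^i(\FF(j))\cong H^{i+1}_\mfm(M)_j$. We apply this with $M=S/I$ and $M=S/J$; the sheaves are $\OO_{\XX_1}$ and $\OO_{\XX_0}$ respectively.

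\textbf{Step 3 (matching degrees).} For $i\ge1$ the equality of $\dim_\bfk H^i(\XX_1,\OO_{\XX_1}(j))$ and $\dim_\bfk H^i(\XX_0,\OO_{\XX_0}(j))$ is immediate from Step 1 and the isomorphisms of Step 2. For $i=0$ we use the exact sequence:
$$\dim H^0(\OO(j)) = \dim M_j - \dim H^0_\mfm(M)_j + \dim H^1_\mfm(M)_j.$$
Gr\"obner degeneration preserves Hilbert functions, so $\dim(S/I)_j=\dim(S/J)_j$. Step 1 gives equality of the two local-cohomology terms. Hence the $H^0$ dimensions agree for every $j$, which completes the proof of the statement.
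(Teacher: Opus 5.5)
Your proposal is correct and takes the same approach as the paper. The paper gives no separate proof: it cites the Conca--Varbaro equality of the graded Hilbert functions of $H^i_{\mathfrak m}(S/I)$ and $H^i_{\mathfrak m}(S/J)$ and reads it in sheaf-cohomological terms, which your Steps 2 and 3 make explicit via $H^i(\widetilde M(j))\cong H^{i+1}_{\mathfrak m}(M)_j$ for $i\ge 1$, the four-term sequence in degree $0$, and preservation of Hilbert functions.
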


Beyond these invariants, Conca and Varbaro asked whether a smooth variety admitting a square-free Gr\"obner degeneration must have a Cohen-Macaulay homogeneous coordinate ring \cite[Question 3.17]{Conca_Varbaro} . 
This question was subsequently formulated as a conjecture in \cite{CDNV}, with further equivalent formulations appearing more recently in \cite{HTVW}. 
Before we state one of these formulations, we recall the following definition.

\begin{definition} A homogeneous ideal $I\subseteq S$ is said to be \textbf{Herzog} if it has a square-free initial ideal for some term order. 
A projective scheme is \textbf{Herzog} if it is isomorphic to $\Proj(S/I)$ for some Herzog ideal $I \subseteq S$.
\end{definition}

\begin{conj} \label{Varbaro_conj} If $X$ is a smooth integral Herzog scheme, then  $H^i(X,\OO_X) =0$ for all $i\geq 1$.
\end{conj}

The case when $X$ is a curve was recently proven in \cite{HTVW}; in particular, smooth integral Herzog curves are always rational. 
If the term order is assumed to be degree reverse lexicographic, then the conjecture was proven in \cite{CDNV}.

Our approach is to study mixed Hodge structures along square-free Gr\"obner degenerations. 
Let $W_\bullet$ be the weight filtration on singular cohomology and $F^{\bullet}$ the Hodge filtration. 
We write $\Gr_F^0 = F^0/F^1$ for the zeroth graded piece (see \Cref{Hodge_section}).
We prove that such degenerations preserve the weight-zero part of singular cohomology and the zeroth graded piece of the Hodge filtration.

\begin{thm} \label{zero_stable}
Let $\XX \to \bfA^1$ be a square-free Gr\"obner degeneration over $\bfC$. Then for all $i\geq 0$, we have canonical isomorphisms
$$
W_0H^i(\XX_0,\mathbb Q)\cong W_0H^i(\XX_1,\mathbb Q), \, \text{ and } \, \Gr_F^0H^i(\XX_0,\mathbb C) \cong \Gr_F^0H^i(\XX_1,\mathbb C).
$$
\end{thm}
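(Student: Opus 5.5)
The strategy is to use the specialization map of a projective degeneration, and to identify $W_0$ and $\Gr_F^0$ with invariants that \Cref{CV_thm} already controls.

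First I will make $\XX$ analytic. Restrict it to a small disc $\Delta\subset\bfA^1$ around $0$. Since $\XX\to\bfA^1$ is projective, $\XX_0$ is a deformation retract of $\XX_\Delta$, which gives a specialization map
$$
\mathrm{sp}\colon H^i(\XX_0,\QQ)\cong H^i(\XX_\Delta,\QQ)\to H^i(\XX_t,\QQ)\cong H^i(\XX_1,\QQ).
$$
The fibers $\XX_t$ for $t\neq 0$ are all isomorphic to $\XX_1$ via the $\GG_m$-trivialization. The monodromy around $0$ is therefore trivial, because the family over $\GG_m$ is a product. By Clemens--Schmid/Steenbrink theory, or more directly by the fact that $\mathrm{sp}$ is a morphism of mixed Hodge structures onto the limit MHS (here the limit MHS is the MHS of $\XX_1$, since the variation is constant), $\mathrm{sp}$ is a morphism of MHS $H^i(\XX_0)\to H^i(\XX_1)$. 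It therefore induces maps on $W_0$ and on $\Gr_F^0$. It remains to show that these maps are isomorphisms.

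For $\Gr_F^0$ I will use the Du Bois description, $\Gr_F^0H^i(Y,\bfC)\cong \mathbb H^i(Y,\underline{\Omega}^0_Y)$. The special fiber $\XX_0$ is a Stanley--Reisner scheme, and such schemes are Du Bois, so $\Gr_F^0H^i(\XX_0)=H^i(\XX_0,\OO_{\XX_0})$. For the general fiber I will use the theorem of Kollár--Kovács that Du Bois singularities deform, in the form that the Du Bois property of the special fiber of a flat family forces $H^i(\XX_t,\OO)\to \mathbb H^i(\XX_t,\underline\Omega^0)$ to be an isomorphism, equivalently surjective, for nearby $t$. The fiber $\XX_1$ is then also Du Bois, and $\Gr_F^0H^i(\XX_1)=H^i(\XX_1,\OO)$. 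By \Cref{CV_thm} with $j=0$ the two sides have equal dimension. Compatibility with $\mathrm{sp}$ is given by the map $H^i(\XX_\Delta,\OO)\to H^i(\XX_t,\OO)$: on the $\OO$ side, cohomology and base change together with constancy of $h^i(\OO)$ show that $R^if_*\OO$ is locally free with fiber $H^i(\XX_t,\OO)$. Hence the restriction maps are isomorphisms, and so is $\Gr_F^0\mathrm{sp}$.

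For $W_0$ I will use the fact that for a proper variety $Y$, $W_0H^i(Y,\QQ)$ injects into $\Gr_F^0$ of its complexification. More precisely, $\dim W_0H^i = h^{0,0}$-part, and a morphism of MHS that is an isomorphism on $\Gr_F^0$ is injective on $W_0$, because $W_0$ is pure of type $(0,0)$ and embeds in $\Gr_F^0$. This gives injectivity of $\mathrm{sp}$ on $W_0$. For surjectivity I will compute $W_0$ combinatorially. For $\XX_0$, which is a union of coordinate linear spaces, $W_0H^i(\XX_0)=\widetilde H^{i}$-type cohomology of the simplicial complex $\Delta$, and it equals all of $H^i(\XX_0,\OO)$ since each component has $H^{>0}(\OO)=0$. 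Thus $\dim W_0H^i(\XX_0)=h^i(\OO_{\XX_0})=h^i(\OO_{\XX_1})\ge \dim W_0H^i(\XX_1)$. Combining this with the injectivity of $W_0\mathrm{sp}$ gives equality and hence an isomorphism.

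I expect the main obstacle to be justifying that $\mathrm{sp}$ is a morphism of MHS to $H^i(\XX_1)$ with its own MHS, rather than to a limit MHS. I would first try to argue directly, as follows. There is a $\GG_m$-equivariant compactification, and $\XX_\Delta\setminus\XX_0$ is a product $\XX_1\times\Delta^*$. The restriction $H^i(\bar\XX)\to H^i(\XX_1\times\{1\})$ is algebraic, hence a morphism of MHS. I would replace $\XX_\Delta$ by the algebraic total space $\XX$, using that $H^i(\XX)\to H^i(\XX_0)$ is an isomorphism, which follows from the $\GG_m$-action contracting $\XX$ to $\XX_0$. The remaining verification is that this isomorphism is compatible with the Du Bois identification.
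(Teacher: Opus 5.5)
Your architecture matches the paper's. You define $\mathrm{sp}=\iota_1^*(\iota_0^*)^{-1}$ on the algebraic total space, where $\iota_0^*$ is a bijective morphism of mixed Hodge structures and hence an isomorphism (\Cref{functoriality}). You show both fibers are Du Bois and prove $\Gr_F^0(\mathrm{sp})$ is an isomorphism. Finally you deduce the $W_0$ statement from the functorial embedding $(W_0H)_\bfC\hookrightarrow\Gr_F^0H_\bfC$ and the Hodge--Tate property of $\XX_0$.

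Several of your local substitutions are fine:
\begin{itemize}
\item $\GG_m$-scaling in place of the paper's excision argument for $H^i(\XX,\QQ)\cong H^i(\XX_0,\QQ)$. The limit map $x\mapsto\lim_{\lambda\to0}\lambda x$ is not continuous, so what you actually use is that scaling makes $\XX$ homotopy equivalent to a small tube $\XX_\Delta$, which retracts onto $\XX_0$.
\item \Cref{CV_thm} in place of Du Bois's base change theorem to get $h^i(\OO_{\XX_0})=h^i(\OO_{\XX_1})$.
\item A dimension count in place of strictness of $\Gr_F^0(s_i)$ for the induced weight filtrations.
\end{itemize}
Drop the limit-MHS/Clemens--Schmid detour: $\XX_1$ need not be smooth, and the direct argument makes it unnecessary.

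The gap is in the $\Gr_F^0$ step, which is the real content of the theorem (\Cref{DuBois_Gr_isom}). From local freeness of $R^if_*\OO$ you conclude that ``the restriction maps are isomorphisms, and so is $\Gr_F^0\mathrm{sp}$''. The first assertion is false. $H^i(\XX_\Delta,\OO)$ is free of rank $h^i(\OO_{\XX_1})$ over the ring of holomorphic functions on $\Delta$, and restriction to a fiber is only surjective. More importantly, there is no natural map $H^i(\XX_0,\OO)\to H^i(\XX_1,\OO)$ through which $\Gr_F^0(\mathrm{sp})$ factors. So local freeness plus equal dimensions says nothing about whether $\Gr_F^0(\mathrm{sp})$ is injective or surjective.

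The missing argument runs as follows.
\begin{enumerate}
\item Take a comparison map from $V=H^i(\XX,\bfC)$ to the $\OO$-cohomology of the total space, compatible with restriction to every fiber. The paper obtains $\alpha_\XX$ by proving that $\XX$ itself is Du Bois; on $\XX_\Delta$, the analytic map $\bfC\to\OO^{\mathrm{an}}$ together with Grauert would also work.
\item Observe that $V\to H^i(\XX_0,\OO)$ is surjective, because $\iota_0^*$ is an isomorphism and $\alpha_{\XX_0}$ is surjective.
\item Use Nakayama (or openness of maximal rank) to get $V\to H^i(\XX_a,\OO)$ surjective for small $a\neq0$.
\item Transport to $a=1$, using that the fiber inclusions over $a$ and $1$ are homotopic under the trivialization and that $\alpha$ is functorial for $\XX_a\cong\XX_1$.
\end{enumerate}
Only then do equal dimensions upgrade surjectivity to bijectivity. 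Your $W_0$ argument uses injectivity of $\Gr_F^0(\mathrm{sp})$, so it inherits this gap until the step is filled.
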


As a consequence, we prove a stronger generalization of \Cref{Varbaro_conj} to varieties with rational singularities.

\begin{thm} \label{Herzog} If $X$ is Herzog over $\bfk$ with rational singularities, then $H^i(X,\OO_X) = 0$ for all $i \geq 1$.
\end{thm}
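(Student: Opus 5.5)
By the Lefschetz principle we may assume $\bfk=\bfC$. The Herzog ideal $I$, its finitely many generators and the term order, together with the square-free initial ideal $J$, are all defined over a finitely generated subfield of $\bfk$. That subfield embeds into $\bfC$. Rational singularities and the vanishing of $H^i(X,\OO_X)$ are preserved under flat base change of fields, so nothing is lost. Write $X=\XX_1$ for the general fiber of the square-free Gr\"obner degeneration $\XX\to\bfA^1$, and $\XX_0=\Proj(S/J)$ for the special fiber, which is the Stanley--Reisner scheme of a simplicial complex $\Delta$.

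The first input is Hodge-theoretic. For a proper variety $Y$ over $\bfC$, the natural map $H^i(Y,\bfC)\to H^i(Y,\OO_Y)$ factors through $\Gr_F^0H^i(Y,\bfC)$. If $Y$ is Du Bois, this induces an isomorphism $\Gr_F^0H^i(Y,\bfC)\cong H^i(Y,\OO_Y)$, since $\underline{\Omega}^0_Y\simeq\OO_Y$. Rational singularities are Du Bois (Kov\'acs, Saito), so $\Gr_F^0H^i(X,\bfC)\cong H^i(X,\OO_X)$. Stanley--Reisner schemes are also Du Bois, being glued from coordinate subspaces with snc-type intersections, so the same identification holds for $\XX_0$. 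Combining this with \Cref{zero_stable}, or more simply with \Cref{CV_thm} for $j=0$, we get $\dim H^i(X,\OO_X)=\dim\Gr_F^0H^i(\XX_0,\bfC)$. Thus it suffices to show $H^i(X,\OO_X)$ injects into something that vanishes, using the weight-zero information.

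The second input is the rational-singularity case. Take a resolution $\pi:\tilde X\to X$. Then $R\pi_*\OO_{\tilde X}=\OO_X$, so $H^i(X,\OO_X)\cong H^i(\tilde X,\OO_{\tilde X})$, which is $\Gr_F^0$ of a \emph{pure} Hodge structure of weight $i$. Since $X$ is Du Bois, $H^i(X,\OO_X)=\Gr_F^0H^i(X,\bfC)$. The map $H^i(X,\bfC)\to H^i(\tilde X,\bfC)$ induces an isomorphism on $\Gr_F^0$, so $\Gr_F^0 H^i(X)$ is concentrated in weight $i$: $\Gr_F^0\Gr^W_k H^i(X)=0$ for $k<i$.

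On the special fiber the structure is opposite. The cohomology of a Stanley--Reisner scheme is computed by a Mayer--Vietoris spectral sequence whose terms are cohomologies of projective spaces (the faces of $\Delta$). The only contributions to $\Gr_F^0$ come from $H^0$ of the strata, which carry weight $0$. Hence $\Gr_F^0H^i(\XX_0)=\Gr_F^0W_0H^i(\XX_0)$, and $W_0 H^i(\XX_0)\cong \widetilde H^{i}$-type simplicial cohomology of $\Delta$ is of Hodge type $(0,0)$. By \Cref{zero_stable} we get $W_0H^i(X)\cong W_0H^i(\XX_0)$, and $\Gr_F^0$ is preserved. Therefore $\Gr_F^0H^i(X)$ has dimension equal to $\dim \Gr_F^0 W_0H^i(X)$. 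This requires that the isomorphism in \Cref{zero_stable} be compatible, i.e.\ that $W_0\subseteq H^i(X)$ surjects onto $\Gr_F^0$. For $i\ge1$, however, $\Gr_F^0 H^i(X)$ lives purely in weight $i>0$, so $\Gr_F^0W_0H^i(X)=0$ by strictness of morphisms of mixed Hodge structures with respect to $F$. The dimension count then forces $H^i(X,\OO_X)=0$.

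The main obstacle is the bookkeeping in this last step. We must ensure that the dimension equality $\dim\Gr_F^0H^i(X)=\dim\Gr_F^0H^i(\XX_0)=\dim W_0H^i(\XX_0)=\dim W_0H^i(X)$ holds. We must also ensure that the $W_0$ part of $X$ contributes to $\Gr_F^0$ only through $\Gr^W_0$, which it does, since $W_0=\Gr^W_0$ is of type $(0,0)$ and injects into $\Gr_F^0$. This gives $\dim \Gr_F^0\Gr^W_0H^i(X)=\dim\Gr_F^0H^i(X)$, so all of $\Gr_F^0$ sits in weight $0$. Combined with the purity in weight $i$ coming from rational singularities, this kills $\Gr_F^0H^i(X)$ for $i\ge1$. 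The delicate inputs are that Stanley--Reisner schemes are Du Bois, and that $\Gr_F^0$ of $\XX_0$ is concentrated in $W_0$. I would verify the latter via the Mayer--Vietoris (semi-simplicial) resolution by projective spaces.
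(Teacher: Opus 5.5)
Your proof is correct and follows the paper's route: reduce to $\bfC$, use the Hodge--Tate special fiber to put all of $\Gr_F^0$ in weight zero, transfer this to $\XX_1$, and use a resolution with rational singularities to force $\Gr_F^0H^i(\XX_1,\bfC)$ into weight $i$, so it vanishes for $i\geq 1$. The only difference is in the transfer step. The paper reads off ``entirely weight zero'' on $\XX_1$ directly, because the $\Gr_F^0$ isomorphism of \Cref{DuBois_Gr_isom} is induced by $s_i$ and is therefore compatible with the weight filtrations. You instead get it from the $W_0$-isomorphism of \Cref{zero_stable} together with a dimension count, which is equally valid.
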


Note that the argument in \cite[Section~2]{Conca_Varbaro} shows that the equality in \Cref{CV_thm} holds between the special and general fibers of any flat graded one-parameter degeneration to a square-free monomial ideal.
However, we will see in \Cref{Counterexample_reduced_general} that the analogous extension of \Cref{Herzog} fails.
In particular, it is not clear how to adapt the techniques of \cite{Conca_Varbaro} to prove this theorem. 

The motivation for our approach arose from joint work  with Cid-Ruiz on the fiber-full scheme \cite{FiberFull,FiberFullLocal}, which parameterizes subschemes with prescribed cohomology dimensions in every twist. 
Square-free Gr\"obner degenerations naturally correspond to curves on these schemes. 
Since fiber-full schemes need not be proper, questions about their compactifications led us to consider mixed Hodge structures, motivated by their role in compactifications of moduli spaces of abelian varieties \cite{AMRT,FC90}. 
The key observation is that the triviality of a Gr\"obner degeneration over $\GG_m$ allows one to construct a specialization map compatible with mixed Hodge structures. This also leads to a more general statement for isotrivial degenerations (\Cref{isotrivial_positive}). 

As a further application, we obtain a criterion for rational singularities of homogeneous  coordinate rings in terms of their associated projective schemes. 
This extends \cite[Corollary 3.4]{CDNV} to arbitrary term orders.

\begin{thm} \label{singularity_theorem} 
Let $I \subseteq S = \bfk[x_1,\dots,x_n]$ be a homogeneous prime ideal that has a square-free initial ideal $J$. 
Then $S/I$ has rational singularities if and only if $\Proj(S/I)$ has rational singularities;
in this case, $S/I$ and $S/J$ are Cohen-Macaulay with negative $a$-invariant.
\end{thm}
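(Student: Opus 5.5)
The forward implication carries the content; I would organize everything through the graded local cohomology modules $H^i_\mfm(S/I)_j$. Let $R=S/I$, $X=\Proj R$ and $d=\dim R=\dim X+1$. The reverse implication is the easy one. The punctured spectrum $\Spec R\setminus\{\mfm\}$ is a $\GG_m$-bundle over $X$: it is the total space of $\OO_X(-1)$ minus its zero section. Rational singularities are local, and they descend along smooth surjective morphisms. So if $R$ has rational singularities then so does $X$.

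For the forward direction, assume $X$ has rational singularities.

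\emph{Step 1 (nonnegative degrees).} Hochster's formula for the Stanley--Reisner ring $S/J$ gives $H^i_\mfm(S/J)_j=0$ for all $j>0$. By the Conca--Varbaro rigidity of graded local cohomology Hilbert functions (the algebraic form of \Cref{CV_thm}), the same vanishing holds for $R$. In degree $0$ we have:
\begin{itemize}
\item for $i\ge 2$, $H^i_\mfm(R)_0\cong H^{i-1}(X,\OO_X)$, which vanishes by \Cref{Herzog};
\item $H^1_\mfm(R)_0=\mathrm{coker}(\bfk\to H^0(X,\OO_X))$, which is $0$ because $X$ is integral and normal, hence connected with $H^0(X,\OO_X)=\bfk$ (passing to $\overline{\bfk}$ if needed);
\item $H^0_\mfm(R)=0$ because $R$ is a domain.
\end{itemize}
Thus $H^i_\mfm(R)_j=0$ for all $i$ and all $j\ge 0$. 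For $i=d$ this says exactly that $a(R)<0$, assuming $\dim X\ge1$; the case $\dim X=0$ is a point and trivial.

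\emph{Step 2 (negative degrees).} For $j<0$ and $2\le i\le d-1$ we have $H^i_\mfm(R)_j=H^{i-1}(X,\OO_X(j))$. To show these vanish, take a resolution $\pi\colon Y\to X$. Since $R\pi_*\OO_Y=\OO_X$, the projection formula gives $H^{i-1}(X,\OO_X(j))=H^{i-1}(Y,\pi^*\OO_X(j))$. The bundle $\pi^*\OO_X(-j)$ is big and nef, so Kawamata--Viehweg vanishing (Serre-dual form) kills this group for $i-1<\dim Y$. For $i=1$, $H^1_\mfm(R)_j$ is the cokernel of $R_j=0\to H^0(X,\OO_X(j))=0$, so it also vanishes. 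Hence $R$ is Cohen--Macaulay with $a(R)<0$. By Conca--Varbaro again, $S/J$ is Cohen--Macaulay with the same $a$-invariant.

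\emph{Step 3 (normality and conclusion).} $R$ is $S_2$ because it is Cohen--Macaulay. It is $R_1$ because $X$ is normal and the vertex has codimension $d\ge2$. So $R$ is normal. Now apply the Flenner--Watanabe criterion: a normal graded ring that is Cohen--Macaulay, has $a(R)<0$, and has $\Proj R$ with rational singularities has rational singularities. This gives that $R$ has rational singularities.

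The input from \Cref{Herzog} and Conca--Varbaro in Step 1 is where the Herzog hypothesis enters essentially. The point needing most care is Step 2, together with reducing from a general characteristic-$0$ field $\bfk$ to $\bfC$ via the Lefschetz principle so that Kawamata--Viehweg vanishing and \Cref{Herzog} apply. I would handle that reduction first, since all the properties involved are stable under flat base change of the field.
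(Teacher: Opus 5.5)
Your skeleton matches the paper's proof: Hochster's formula and Conca--Varbaro rigidity for positive degrees, \Cref{Herzog} in degree $0$, normality via $(S_2)+(R_1)$, and the Flenner--Watanabe criterion at the end (the paper cites \cite[Theorem 5.1]{KSSW09}). The implication you call the ``reverse'' one, $S/I$ rational $\Rightarrow$ $X$ rational, is the forward one as stated, but you prove both directions.

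The real difference is how you kill $H^i_\mfm(S/I)_j$ for $j<0$ and $i<d$. You apply Kawamata--Viehweg vanishing on a resolution. This is correct, and it gives $H^k(X,\OO_X(j))=0$ for $j<0$ and $k<\dim X$ for any projective $X$ with rational singularities, without using the Herzog hypothesis. The paper uses rational singularities of the punctured spectrum only to get that $H^i_\mfm(S/I)$ has finite length for $i<d$ \cite[Remark 2.1]{CDNV}. It then transfers finite length to $S/J$ by Conca--Varbaro and uses the Stanley--Reisner structure \cite[Remark 2.10]{Conca_Varbaro}. By Hochster's formula, all multidegrees with a given nonempty support carry the same link cohomology, so finite length forces concentration in degree $0$. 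The paper's route avoids vanishing theorems and works once the punctured spectrum is Cohen--Macaulay. Yours is the standard geometric argument and uses the Herzog input only in degrees $j\ge 0$.

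\textbf{Gap: the step $H^1_\mfm(S/I)_0=0$, i.e.\ $H^0(X,\OO_X)=\bfk$.} The same issue affects your dismissal of $d=1$ as ``a point''. Over a field that is not algebraically closed, an integral normal projective scheme can have $H^0(X,\OO_X)=K$ a proper finite extension of $\bfk$. For example, a Galois-conjugate pair of skew lines in $\PP^3_{\QQ}$ is integral and smooth over $\QQ$ with $H^0=\QQ(\sqrt{-1})$. Passing to $\overline{\bfk}$ then produces a disconnected scheme, so it does not help. Reducing to $\bfC$ at the outset, as you propose, is worse: primality of $I$ is not preserved by field extension, so ``$R$ is a domain'' and ``$X$ is integral'' are no longer available.

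The Herzog hypothesis is genuinely needed here; the paper cites \cite[Remark 1.3]{HTVW}. One direct argument runs as follows.
\begin{enumerate}
\item By \Cref{CV_thm}, $h^0(\OO)$ is constant in the family. Hence $A=f_*\OO_\XX$ is a finite flat $\bfk[t]$-algebra compatible with base change.
\item We have $A[t^{-1}]\cong K[t^{\pm1}]$ and $A/tA\cong H^0(\XX_0,\OO_{\XX_0})\cong\bfk^c$, where $c=[K:\bfk]$ equals the number of connected components of $\Delta$.
\item Since $A$ is integral over $\bfk[t]$, we get $A\subseteq K[t]$. So the $t$-adic completion of $A$ embeds in the domain $K[[t]]$.
\item Lifting idempotents from $\bfk^c$ to this completion then forces $c=1$.
\end{enumerate}

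Once $H^0(X,\OO_X)=\bfk$ is known, $X_\bfC$ is connected and normal, hence integral, and the rest of your argument goes through. The reduction to $\bfC$ is in any case unnecessary, since \Cref{Herzog} and Kawamata--Viehweg vanishing hold over any field of characteristic $0$. Over an algebraically closed field your argument is fine as written.
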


%; while the extension of \Cref{singularity_theorem} seems to a bit more subtle. 
Finally, in \Cref{Examples} we work out a number of examples of mixed Hodge structures along square-free Gr\"obner degenerations.

\section*{Future Directions} 
There have been several recent advances in the study of higher Du Bois and higher rational singularities; see, for example, \cite{Laza_Friedman,MOPW23,MP20, JKSY22}. 
It would be interesting to extend our results to these higher singularities. 
As in \Cref{singularity_theorem}, the vanishing of certain cohomology groups governs the singularities of the homogeneous coordinate ring \cite[Section 7]{SVV}. 

\begin{question}  
Under what additional conditions on a smooth Herzog scheme $X \subseteq \PP^n$ do we have
$ H^i(X,\Omega_X^j(m))=0 $ for all $i,j,m > 0$?
\end{question}
Note that such a vanishing statement cannot hold for every smooth Herzog scheme. 
Indeed, a smooth quadric hypersurface $Q \subseteq \PP^4$ is Herzog, but
$H^1(Q,\Omega_Q^2(1))\cong \bfk$ \cite[Section 4.1]{BTLM97}.

\section*{Acknowledgments} 
We thank Allen Knutson for discussions on Gr\"obner degenerations over the years. 
We thank Debaditya Raychaudhury for helpful conversations on Du Bois singularities.
We also thank David Eisenbud and  Alessio Sammartano for helpful comments. 
OpenAI Codex and Macaulay2 were used to compute Hodge structures on Stanley-Reisner schemes. 

\section*{Funding} Ritvik Ramkumar was  supported by NSF grant DMS-2548906.

\section{Mixed Hodge structures and Du Bois complexes} \label{Hodge_section}
In this section, we recall all the necessary background on mixed Hodge structures and Du Bois complexes. 
For projective Du Bois schemes, these give a natural identification of $H^i(X,\OO_X)$ with the zeroth Hodge quotient of singular cohomology. 
We begin with the classical theory.

\begin{notation}
Unless otherwise stated, all schemes are assumed to be separated and of finite type over $\bfC$. We write $X^{\mathrm{an}}$ for the associated complex analytic space, whose underlying set is $X(\bfC)$ with its analytic (Euclidean) topology. 
The groups $H^i(X,\QQ)$ and $H^i(X,\bfC)$ denote singular cohomology of this space, while $H^i(X^{\mathrm{an}},\mathscr G)$ denotes sheaf cohomology. 
We write $\QQ_{X^{\mathrm{an}}}$ and $\bfC_{X^{\mathrm{an}}}$ for the constant sheaves, whose sections are locally constant functions. 
For a coherent sheaf $\FF$, the notation $H^i(X,\FF)$ denotes Zariski sheaf cohomology.

For a bounded below complex of sheaves $\mathscr{K}^\bullet$ on $X$, we write $\mathbb H^i(X,\mathscr{K}^\bullet)$ for its hypercohomology in the Zariski topology.
We use the analogous notation on $X^{\mathrm{an}}$ for hypercohomology in the analytic topology.
\end{notation}

Since $X^{\mathrm{an}}$ is locally contractible and paracompact, the natural comparison is an isomorphism
\begin{equation}\label{constant_comp}
H^i(X,\QQ)\xrightarrow{\sim}H^i(X^{\mathrm{an}},\QQ_{X^{\mathrm{an}}});
\end{equation}
see \cite[Theorem III.1.1]{Bredon}.
The same statement holds with $\bfC$ coefficients.
If $X$ is projective and $\FF$ is coherent, GAGA \cite{SerreGAGA} gives the comparison isomorphism 
$$
H^i(X,\FF)\xrightarrow{\sim}H^i(X^{\mathrm{an}},\FF^{\mathrm{an}}).
$$
The inclusion $\bfC_{X^{\mathrm{an}}}\hookrightarrow\OO_{X^{\mathrm{an}}}$, of locally constant functions into holomorphic functions, 
induces, for projective $X$, a natural map via preceding isomorphisms
$$
H^i(X,\bfC)\longrightarrow H^i(X,\OO_X).
$$
For a smooth projective scheme, classical Hodge theory realizes $H^i(X,\OO_X)$ as a distinguished summand of $H^i(X,\bfC)$. 
Hodge structures record this decomposition and its compatibility with complex conjugation.

\begin{definition}
Let $H$ be a finite-dimensional vector space over $\QQ$, and put $H_\bfC=H\otimes_\QQ\bfC$. A \textbf{pure Hodge structure of weight $m$} on $H$ is a decomposition into complex subspaces, called the \textbf{Hodge decomposition},
$$
H_\bfC=\bigoplus_{p+q=m}H^{p,q}
$$
with $\overline{H^{p,q}}=H^{q,p}$. 
Note that conjugation is $\overline{h\otimes z}=h\otimes\overline{z}$. 
The \textbf{Hodge numbers} are $h^{p,q}=\dim_\bfC H^{p,q}$, and the \textbf{Hodge filtration} is
\begin{equation*} 
F^aH_\bfC :=\bigoplus_{p\geq a}H^{p,m-p}.
\end{equation*}
We write $\Gr_F^pH_\bfC=F^pH_\bfC/F^{p+1}H_\bfC$ for its associated graded pieces.
\end{definition}

\begin{remark} A finite decreasing filtration $F^\bullet$ defines a pure Hodge structure of weight $m$ if we have $H_\bfC=F^pH_\bfC\oplus\overline{F^{m-p+1}H_\bfC}$ for all $p$. 
In this case, we recover the Hodge decomposition by $H^{p,m-p}=F^pH_\bfC\cap\overline{F^{m-p}H_\bfC}$.
\end{remark}

We are now ready to state the classical Hodge decomposition. 
Given a scheme $X$, let  $\Omega_X^1$ denote the sheaf of K\"ahler differentials and $\Omega_X^p=\bigwedge^p\Omega_X^1$ the sheaf of differential $p$-forms.

\begin{thm}[ {\cite{DeligneII}}]\label{classical_Hodge}
If $X$ is smooth and projective, then $H^i(X,\QQ)$ has a natural pure Hodge structure of weight $i$, with
$$
H^i(X,\bfC)=\bigoplus_{p+q=i}H^{p,q}(X)
$$
where  $H^{p,q}(X)\cong H^q(X,\Omega_X^p)$. 
In particular, 
$$
\Gr_F^0H^i(X,\bfC)\cong H^i(X,\OO_X).
$$
\end{thm}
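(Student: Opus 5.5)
This statement is Deligne's classical Hodge theorem for smooth projective varieties, and the paper cites it as input from \cite{DeligneII}. The proof I would give is therefore a sketch of the standard argument rather than anything new.

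\textbf{Step 1: analytic de Rham resolution.} By GAGA and the comparison isomorphism \eqref{constant_comp}, it suffices to work on the compact complex manifold $X^{\mathrm{an}}$. The holomorphic Poincar\'e lemma says the holomorphic de Rham complex $\Omega^\bullet_{X^{\mathrm{an}}}$ resolves $\bfC_{X^{\mathrm{an}}}$. Hence
$$
H^i(X,\bfC)\cong\mathbb H^i(X^{\mathrm{an}},\Omega^\bullet_{X^{\mathrm{an}}}).
$$
Define $F^p$ as the image of hypercohomology of the stupid truncation $\sigma_{\geq p}\Omega^\bullet$. This filtration comes from the Hodge-to-de Rham spectral sequence
$$
E_1^{p,q}=H^q(X^{\mathrm{an}},\Omega^p)\Rightarrow H^{p+q}(X,\bfC).
$$

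\textbf{Step 2: K\"ahler harmonic theory.} A projective embedding gives $X^{\mathrm{an}}$ the restricted Fubini--Study K\"ahler metric. The harmonic projection has two key inputs:
\begin{itemize}
\item the K\"ahler identity $\Delta_d=2\Delta_{\bar\partial}$, which shows each $d$-harmonic form splits into $(p,q)$ pieces that are themselves harmonic;
\item Dolbeault's theorem, which gives $H^q(X,\Omega^p)\cong$ the $\bar\partial$-harmonic $(p,q)$-forms.
\end{itemize}
Setting $H^{p,q}$ to be the space of harmonic $(p,q)$-forms then gives the decomposition $H^i(X,\bfC)=\bigoplus_{p+q=i}H^{p,q}$. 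Since the Laplacian is a real operator, complex conjugation swaps $(p,q)$-harmonic forms with $(q,p)$-harmonic forms, so $\overline{H^{p,q}}=H^{q,p}$.

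\textbf{Step 3: degeneration of the spectral sequence.} Comparing dimensions, $\dim H^i=\sum_{p+q=i}\dim E_1^{p,q}$. This forces the spectral sequence to degenerate at $E_1$. It follows that $F^p=\bigoplus_{a\geq p}H^{a,i-a}$, and that $\Gr_F^p\cong H^{i-p}(X,\Omega^p)$.

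\textbf{Step 4: the $\Gr_F^0$ statement.} Taking $p=0$ gives $\Gr_F^0H^i(X,\bfC)=H^i(X,\bfC)/F^1\cong H^i(X,\OO_X)$. This map is the one induced by the inclusion $\bfC\hookrightarrow\OO$, since it is the edge map $\Omega^\bullet\to\Omega^0$.

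\textbf{Naturality.} Independence of the K\"ahler metric and functoriality follow because $F^\bullet$ is defined sheaf-theoretically via $\sigma_{\geq p}$, with no reference to the metric.

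The only substantive obstacle is the analytic input: the K\"ahler identities together with elliptic Hodge theory, i.e.\ finite-dimensionality of harmonic spaces and the Hodge decomposition of forms. I would quote these from Griffiths--Harris or Voisin rather than reprove them.
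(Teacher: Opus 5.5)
Your sketch is correct. It is the standard argument behind this theorem, which the paper does not prove but simply quotes from Deligne: the holomorphic de Rham resolution, K\"ahler harmonic theory, $E_1$-degeneration of the Hodge-to-de Rham spectral sequence by the dimension count, and $\Gr_F^0$ read off as the edge map induced by $\bfC_{X^{\mathrm{an}}}\hookrightarrow\OO_{X^{\mathrm{an}}}$.

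The one step you pass over quickly is $F^p=\bigoplus_{a\geq p}H^{a,i-a}$ in Step 3. This uses that $\sigma_{\geq p}\Omega^\bullet_{X^{\mathrm{an}}}$ is quasi-isomorphic to the complex of smooth forms of type $(a,b)$ with $a\geq p$, with differential $d=\partial+\bar\partial$. Because of this, harmonic $(a,i-a)$-forms with $a\geq p$ visibly define classes in $F^p$, and your dimension count then gives equality.
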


For singular projective schemes, cohomology in a fixed degree can have several weights. 
A mixed Hodge structure replaces the pure decomposition by a filtration whose successive quotients are pure.

\begin{definition}\label{def:mixed}
A \textbf{mixed Hodge structure} consists of a finite-dimensional rational vector space $H$, a finite increasing \textbf{weight filtration} $W_\bullet H$ by rational subspaces, and a finite decreasing \textbf{Hodge filtration} $F^\bullet H_\bfC$ by complex subspaces, such that
$$
\Gr_m^W H=W_mH/W_{m-1}H
$$
is a pure Hodge structure of weight $m$ for the filtration induced by $F^\bullet$. 
Explicitly,
\begin{equation*}\label{induced_hodge}
F^p(\Gr_m^WH)_\bfC =\frac{(F^pH_\bfC\cap (W_mH)_\bfC)+(W_{m-1}H)_\bfC}{(W_{m-1}H)_\bfC}.
\end{equation*}
\end{definition}

\begin{remark}
By finite we mean $W_mH=0$ for $m\ll0$ and $W_mH=H$ for $m\gg0$, while $F^pH_\bfC=H_\bfC$ for $p\ll0$ and $F^pH_\bfC=0$ for $p\gg0$. 
Just as before, we write $\Gr_F^pH_\bfC=F^pH_\bfC/F^{p+1}H_\bfC$. 
A pure structure of weight $m$ is viewed as mixed by setting $W_{m-1}H=0$ and $W_mH=H$.
\end{remark}

\begin{definition}\label{def:effective}
The \textbf{Hodge numbers} of a mixed Hodge structure $H$ are the Hodge numbers of its weight quotients, i.e., 
$$
h^{p,q}(H)=\dim_\bfC\Gr_F^p(\Gr_{p+q}^WH)_\bfC.
$$
We call $H$ \textbf{effective} if $h^{p,q}(H)=0$ unless $p,q\geq0$.
\end{definition}

\begin{definition}\label{Hodge_Tate_Def}
We use $\QQ(-r)$ to denote the one-dimensional pure Hodge structure of weight $2r$ and Hodge type
$(r,r)$.   

A mixed Hodge structure is of \textbf{Hodge-Tate type} if its odd weight quotients vanish and each $\Gr_{2r}^WH$ is a direct sum of copies of $\QQ(-r)$. 
Equivalently, $h^{p,q}(H)=0$ for $p\ne q$. 
\end{definition}

\begin{definition}
A \textbf{morphism} of mixed Hodge structures is a linear map $f:H\to H'$ such that
$$
f(W_mH)\subseteq W_mH' \, \text{ and } \, f_\bfC(F^pH_\bfC)\subseteq F^pH'_\bfC
$$
for all $m,p$. It is \textbf{strict} for the weight and Hodge filtrations, respectively, if
$$
f(W_mH) = f(H)\cap W_mH'  \, \text{ and } \, f_\bfC(F^pH_\bfC) = f_\bfC(H_\bfC)\cap F^pH'_\bfC.
$$
\end{definition}

In particular, if the morphism is strict, then the filtrations on the image induced from the source and target agree.

\begin{notation}
The weight filtration on $H$ induces a filtration on each Hodge quotient $\Gr_F^pH_\bfC$. In particular,
$$
W_m\Gr_F^pH_\bfC = \frac{(F^pH_\bfC\cap(W_mH)_\bfC)+F^{p+1}H_\bfC}{F^{p+1}H_\bfC}.
$$
\end{notation}

\begin{thm}[{\cite[Theorem 2.3.5]{DeligneII}}]
\label{Deligne_Lemma}
Mixed Hodge structures form an abelian category. Kernels and cokernels carry the induced and quotient filtrations, respectively.
Every morphism is strict for both filtrations. 
The functors $W_m$, $\Gr_m^W$, and $\Gr_F^p$ are exact.

Moreover, for every morphism $f:H\to H'$, the induced map
$$
\Gr_F^p(f_\bfC):
\Gr_F^pH_\bfC\longrightarrow\Gr_F^pH'_\bfC
$$
is strict for the induced weight filtrations. 
In particular, if this map is bijective, it is an isomorphism of filtered vector spaces.
\end{thm}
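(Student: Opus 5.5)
The plan is to reduce everything to the canonical bigrading of a mixed Hodge structure, the \emph{Deligne splitting}. For a mixed Hodge structure $(H,W_\bullet,F^\bullet)$ put
$$
I^{p,q}=F^pH_\bfC\cap (W_{p+q}H)_\bfC\cap\Bigl(\overline{F^qH_\bfC}\cap (W_{p+q}H)_\bfC+\sum_{j\geq 1}\overline{F^{q-j}H_\bfC}\cap (W_{p+q-j-1}H)_\bfC\Bigr).
$$
I will prove that $H_\bfC=\bigoplus_{p,q}I^{p,q}$, and that
$$
(W_mH)_\bfC=\bigoplus_{p+q\leq m}I^{p,q},\qquad F^pH_\bfC=\bigoplus_{p'\geq p}\,\bigoplus_q I^{p',q}.
$$
I will also show that the projection $W_m\to\Gr_m^W$ maps $\bigoplus_{p+q=m}I^{p,q}$ isomorphically onto the Hodge decomposition of $\Gr_m^WH$. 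The proof is by induction on the length of the weight filtration. The base case is a pure structure, where $I^{p,q}=H^{p,q}$ by the Remark following the definition of a pure Hodge structure. For the inductive step, I will use the opposedness $F^p\oplus\overline{F^{m-p+1}}$ on $\Gr_m^W$ to lift $H^{p,q}(\Gr_m^W)$ uniquely into $W_m$ modulo $W_{m-1}$, subject to the correction terms in the formula. This verification is the main obstacle: it is a careful linear-algebra bookkeeping of intersections and of the failure of conjugation to preserve $F$ modulo lower weights. I would first do the case of two consecutive weights, then induct. The key point is that the formula is built only from $W$, $F$ and $\overline F$, so any morphism $f$ of mixed Hodge structures satisfies $f_\bfC(I^{p,q}_H)\subseteq I^{p,q}_{H'}$.

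With the splitting in hand, strictness is immediate. Since $f_\bfC=\bigoplus f^{p,q}$ is bigraded, we get
$$
f_\bfC(W_m)=\bigoplus_{p+q\leq m}f(I^{p,q})=f(H_\bfC)\cap\bigoplus_{p+q\leq m}I'^{p,q},
$$
and the same computation works for $F^p$. Rationality of $W$-strictness follows because both sides are defined over $\QQ$ and agree after tensoring with $\bfC$.

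For the abelian category claim, I equip $\ker f$ and $\operatorname{coker} f$ with the induced and quotient filtrations. Strictness gives exact sequences compatible with $W$ and $F$, so $\Gr_m^W\ker f=\ker(\Gr_m^W f)$, and similarly for cokernels. A morphism of pure structures of weight $m$ preserves the $H^{p,q}$, so its kernel and cokernel are pure of weight $m$. Hence $\ker f$ and $\operatorname{coker} f$ are mixed Hodge structures. They satisfy the universal properties because any morphism into $H$ killed by $f$ lands in $\ker f$ compatibly with the induced filtrations. Image and coimage agree because strictness identifies both filtrations on $f(H)$, which is the remark after the definition of morphisms. Exactness of $W_m$, $\Gr_m^W$ and $\Gr_F^p$ follows from the splitting, since each of these functors is, on underlying $\bfC$-spaces, a sum of bigraded pieces $I^{p,q}$, and taking such sums is exact on bigraded sequences. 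For $W_m$ itself, exactness over $\QQ$ follows from exactness over $\bfC$.

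For the final assertion, $\Gr_F^pH_\bfC\cong\bigoplus_qI^{p,q}$, and under this identification the induced filtration is
$$
W_m\Gr_F^pH_\bfC=\bigoplus_{q\leq m-p}I^{p,q},
$$
which I check directly from the Notation defining the induced filtration together with the two displayed formulas for $W_m$ and $F^p$. Then $\Gr_F^p(f_\bfC)=\bigoplus_q f^{p,q}$ is bigraded, so it is strict for this filtration by the same argument as above. A bijective strict filtered map satisfies $f(W_m)=W_m'$, so its inverse is filtered, giving an isomorphism of filtered vector spaces.
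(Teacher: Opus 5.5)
The paper gives no proof of its own here, only the citation to \cite{DeligneII}, and your plan is the standard argument behind that citation: the canonical splitting $I^{p,q}$. Strictness, the description of kernels and cokernels, the exactness statements, and the strictness of $\Gr_F^p(f_\bfC)$ for $W_m\Gr_F^pH_\bfC=\bigoplus_{q\leq m-p}I^{p,q}$ all follow from it as you describe. The splitting lemma you defer carries all of the content, and in its inductive step the opposedness that matters is $\Gr^W_kH_\bfC=F^p\oplus\overline{F^{k-p+1}}$ on the lower pieces $k<m$, not the decomposition of $\Gr^W_m$ itself: the direct-sum part gives injectivity of $I^{p,q}\to H^{p,q}(\Gr^W_m)$, and the spanning part gives surjectivity by successively correcting the error term.
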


\begin{cor}\label{cor:strictness-consequences}
A bijective morphism of mixed Hodge structures is an isomorphism. 
A morphism between pure Hodge structures of different weights is zero.
\end{cor}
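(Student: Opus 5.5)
The plan is to derive both assertions of \Cref{cor:strictness-consequences} from \Cref{Deligne_Lemma} by purely formal arguments inside the abelian category of mixed Hodge structures.

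For the first assertion, let $f:H\to H'$ be a bijective morphism of mixed Hodge structures. Its underlying linear map has an inverse $g:H'\to H$, and the task is to show that $g$ is again a morphism, i.e.\ that $f(W_mH)=W_mH'$ and $f_\bfC(F^pH_\bfC)=F^pH'_\bfC$ for all $m,p$. By \Cref{Deligne_Lemma}, $f$ is strict for both filtrations. Since $f$ is surjective, $f(H)=H'$ and $f_\bfC(H_\bfC)=H'_\bfC$, so strictness gives
$$
f(W_mH)=H'\cap W_mH'=W_mH' \quad\text{and}\quad f_\bfC(F^pH_\bfC)=H'_\bfC\cap F^pH'_\bfC=F^pH'_\bfC.
$$
Applying $g$ and using injectivity of $f$ yields $g(W_mH')=W_mH$ and $g_\bfC(F^pH'_\bfC)=F^pH_\bfC$. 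Hence $g$ is a morphism inverse to $f$. Equivalently, one can note that in an abelian category a morphism with zero kernel and zero cokernel is an isomorphism, because by \Cref{Deligne_Lemma} kernels and cokernels are computed on underlying vector spaces.

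For the second assertion, let $H$ be pure of weight $m$ and $H'$ pure of weight $m'\neq m$, and let $f:H\to H'$ be a morphism. First I will check that the image $f(H)\subseteq H'$ has the same weight filtration whether it is induced from the source or from the target; this follows from strictness of $f$ for $W_\bullet$ in \Cref{Deligne_Lemma}. Seen as a quotient of $H$, the image satisfies $W_{m-1}f(H)=f(W_{m-1}H)=0$ and $W_mf(H)=f(H)$. Seen as a subobject of $H'$, it satisfies $W_{m'-1}f(H)=f(H)\cap W_{m'-1}H'=0$ and $W_{m'}f(H)=f(H)$.

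Now suppose $m<m'$. Then $W_mf(H)\subseteq W_{m'-1}f(H)=0$, while also $W_mf(H)=f(H)$, so $f(H)=0$. The case $m>m'$ is symmetric, with the roles of $m$ and $m'$ exchanged. In either case $f=0$.

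Nothing in this argument is expected to be a real obstacle. The only point needing care is to invoke strictness, rather than only compatibility, so that the induced filtrations on the image agree.
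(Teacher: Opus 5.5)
Your proposal is correct and follows the paper's intended route: the paper gives no separate proof and deduces this corollary directly from the strictness statement in \Cref{Deligne_Lemma}. Your two arguments are exactly that deduction: surjectivity plus strictness gives $f(W_mH)=W_mH'$ and $f_\bfC(F^pH_\bfC)=F^pH'_\bfC$, and for the second claim you compare the two coinciding weight filtrations on the image. (Minor remark: the case $m<m'$ already follows from compatibility with $W_\bullet$ alone, so strictness is only genuinely needed when $m>m'$.)
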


\begin{remark}\label{sub_Hodge} Given a mixed Hodge structure $H$, the induced filtrations make both $W_mH$ and $H/W_mH$ mixed Hodge structures. 
In particular, $F^p(W_mH)_\bfC=F^pH_\bfC\cap(W_mH)_\bfC$.
\end{remark}

Deligne extended the classical Hodge structure on smooth projective schemes to mixed Hodge structures on the cohomology of arbitrary complex schemes. 

\begin{thm}[{\cite[Proposition 8.2.2, Theorem 8.2.4]{DeligneIII}}] \label{Deligne_big_thm}
The groups $H^i(X,\QQ)$ carry functorial effective mixed Hodge structures. If $X$ is projective, then
$$
W_iH^i(X,\QQ)=H^i(X,\QQ).
$$
Thus, the only possible weights are between $0$ and $i$. If $X$ is also smooth, $H^i(X,\QQ)$ is pure of weight $i$.
\end{thm}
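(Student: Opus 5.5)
Since \Cref{Deligne_big_thm} is quoted from \cite{DeligneIII}, I would follow Deligne's strategy and reduce everything to the pure case of \Cref{classical_Hodge} by cohomological descent. The last assertion is just \Cref{classical_Hodge}, viewing a pure structure as mixed. For the projective (more generally proper) case, I would use Hironaka's resolution of singularities to build a proper hypercovering $\varepsilon\colon X_\bullet\to X$. Here each $X_p$ is smooth and projective, and $\QQ_{X^{\mathrm{an}}}\to R\varepsilon_*\QQ_{X_\bullet^{\mathrm{an}}}$ is a quasi-isomorphism by cohomological descent. One concrete way to get $X_\bullet$ is to resolve $X$, then resolve the fibre products over $X$, and iterate. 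Filtering by simplicial degree gives a spectral sequence
$$
E_1^{p,q}=H^q(X_p,\QQ)\Longrightarrow H^{p+q}(X,\QQ),
$$
with $d_1$ the alternating sum of the face maps. Each $E_1^{p,q}$ is pure of weight $q$, and $d_1$ is a morphism of Hodge structures.

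I would then define $W$ as the filtration induced by this spectral sequence (suitably renumbered) and $F$ via the Hodge filtrations on the $X_p$. Concretely, I would use the simplicial de Rham complex $\Omega^\bullet_{X_\bullet}$ with its stupid filtration, which gives a cohomological mixed Hodge complex. The key step is to show that this spectral sequence degenerates at $E_2$. I expect this to be the main obstacle. The argument I would use is Deligne's: $d_r$ for $r\ge 2$ maps a subquotient pure of weight $q$ to one pure of weight $q-r+1\neq q$, hence is zero by \Cref{cor:strictness-consequences}. Making this rigorous requires the lemma on two filtrations, namely that $F$ induces the same filtration on $E_r$ in all the natural ways, together with the strictness in \Cref{Deligne_Lemma}.

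Granting degeneration, $\Gr^W_q H^i(X,\QQ)=E_2^{i-q,q}$ is a subquotient of $H^q(X_{i-q},\QQ)$ with $0\le q\le i$. This gives weights in $[0,i]$, i.e.\ $W_iH^i=H^i$. It also gives effectivity, since the $h^{p,q}$ of smooth projective varieties vanish unless $p,q\ge0$. For independence of the chosen hypercovering and for functoriality, I would compare any two hypercoverings through a common refinement, which exists by resolution. Then I would use that a morphism of mixed Hodge structures inducing an isomorphism on underlying groups is an isomorphism (\Cref{cor:strictness-consequences}).

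For arbitrary (non-proper) $X$, I would first treat smooth $U$. Choose a smooth compactification $\overline U$ with simple normal crossings boundary $D$ and use the log de Rham complex $\Omega^\bullet_{\overline U}(\log D)$. Here $W$ comes from the pole-order filtration, whose graded pieces are Tate-twisted cohomologies of the strata of $D$ (weights $\ge i$), and $F$ is the stupid filtration. For general $X$, I would combine the two constructions via a simplicial smooth hypercovering with normal crossings compactifications. Effectivity persists because the twists $\QQ(-r)$ that occur have $r\ge0$. Only the projective statement is needed later in the paper, so the proper case above is the essential part.
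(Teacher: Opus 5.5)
Your outline is correct and is essentially Deligne's own proof in \cite{DeligneIII}: proper hypercoverings by smooth projective varieties with cohomological descent, the simplicial de Rham mixed Hodge complex, $E_2$-degeneration of the weight spectral sequence via the two-filtrations lemma, and logarithmic complexes with simplicial compactifications in the non-proper case. The paper gives no argument of its own and simply cites Deligne.

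One correction to your closing remark: the paper does use the non-proper case. \Cref{functoriality} needs mixed Hodge structures on $H^i(\XX,\QQ)$ and on relative groups, where the total space $\XX$ lies over $\bfA^1$ and so is not proper. That part of your sketch is therefore not dispensable.
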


We now prove two simple lemmas illustrating what happens when the Hodge structure is effective. 
In particular, if $H$ is effective, we have $\Gr_F^0H_\bfC=H_\bfC/F^1H_\bfC$ and we can identify the weight-zero part.

\begin{lemma}\label{Eff}
Let $H$ be an effective mixed Hodge structure. Then $W_{-1}H=0$, $F^0H_\bfC=H_\bfC$, and $W_0H$ is pure of type $(0,0)$.
\end{lemma}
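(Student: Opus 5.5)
The plan is to derive all three claims directly from the definitions: effectivity of the Hodge numbers, finiteness of the filtrations, and exactness of $\Gr_F^p$ and $\Gr_m^W$ (\Cref{Deligne_Lemma}).

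First we show $W_{-1}H=0$. Since $W_\bullet$ is finite, it suffices to show $\Gr_m^W H=0$ for every $m\le -1$. By definition, $\Gr_m^WH$ is pure of weight $m$, so
$$
(\Gr_m^WH)_\bfC=\bigoplus_{p+q=m}H^{p,q}, \qquad \dim H^{p,q}=h^{p,q}(H).
$$
If $m<0$, then in each pair with $p+q=m$ one of $p,q$ is negative, so effectivity forces $h^{p,q}(H)=0$. Hence $\Gr_m^WH=0$ for all $m<0$. Since $W_mH=0$ for $m\ll 0$, induction upward gives $W_mH=0$ for all $m\le -1$.

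Next we show $F^0H_\bfC=H_\bfC$. Since $F^pH_\bfC=H_\bfC$ for $p\ll0$, it suffices to show $\Gr_F^pH_\bfC=0$ for $p<0$. Because $\Gr_F^p$ is exact and $W_\bullet$ is finite, $\dim\Gr_F^pH_\bfC$ equals the sum over $m$ of $\dim\Gr_F^p(\Gr_m^WH)_\bfC$. Each of these summands is $h^{p,m-p}(H)$, which vanishes when $p<0$. Therefore $F^p=F^{p+1}$ for all $p<0$, and descending from $p\ll 0$ gives $F^0H_\bfC=H_\bfC$.

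Finally we show $W_0H$ is pure of type $(0,0)$. Since $W_{-1}H=0$, we have $W_0H=\Gr_0^WH$, which is pure of weight $0$. Its only nonzero piece can be $H^{0,0}$, because $h^{p,-p}(H)=0$ for $p\neq0$ by effectivity. By \Cref{sub_Hodge}, the induced filtration on $W_0H$ is the one computed from $H$, so this description of $W_0H$ is consistent.

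The only point requiring care is the additivity step: the Hodge numbers are defined on the weight-graded pieces, and we need them to compute $\Gr_F^p$ of $H$ itself. This is exactly what the exactness of $\Gr_F^p$ together with the finiteness of $W_\bullet$ provides, so I do not expect a genuine obstacle.
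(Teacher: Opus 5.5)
Your proof is correct and essentially matches the paper's. The treatment of $W_{-1}H$ and of $W_0H=\Gr_0^WH$ is identical. For $F^0H_\bfC=H_\bfC$ you use exactness of $\Gr_F^p$ to get $\dim\Gr_F^pH_\bfC=\sum_m h^{p,m-p}(H)=0$ for $p<0$, whereas the paper applies exactness of $F^0$ to $0\to W_{m-1}H\to W_mH\to\Gr_m^WH\to 0$ and inducts on $m$; this is the same strictness input from \Cref{Deligne_Lemma} in slightly different packaging.
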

\begin{proof}
If $m<0$, then the Hodge numbers of $\Gr_m^WH$ vanish, and thus $\Gr_m^WH=0$. Since $W$ is finite, we have $W_{-1}H=0$. 
Each weight quotient has Hodge types $(p,q)$ with $p,q\geq0$, and hence $F^0(\Gr_m^W H)_\bfC=(\Gr_m^W H)_\bfC$.
By strictness, the short exact sequences
$$
0\longrightarrow W_{m-1}H \longrightarrow W_mH \longrightarrow \Gr_m^W H \longrightarrow0
$$
remain exact after complexifying and applying  $F^0$.
We now induct on $m$, beginning with $W_{-1}H=0$, to obtain $F^0(W_mH)_\bfC=(W_mH)_\bfC$.
Choosing $m\gg 0$ we obtain $F^0H_\bfC=H_\bfC$.
Finally, $W_0H=\Gr_0^W H$, whose only possible Hodge type is $(0,0)$.
\end{proof}

\begin{lemma}\label{Hodge_Tate_inclusion}
Let $H$ be an effective mixed Hodge structure. The quotient map $H_\bfC\to\Gr_F^0H_\bfC$ restricts to a functorial injection
$$
(W_0H)_\bfC \hookrightarrow\Gr_F^0H_\bfC
$$
with image $W_0\Gr_F^0H_\bfC$. 
If $H$ is of Hodge-Tate type, the above map is an isomorphism.
\end{lemma}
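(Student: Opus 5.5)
We first show that the composite $(W_0H)_\bfC \hookrightarrow H_\bfC \to \Gr_F^0H_\bfC = H_\bfC/F^1H_\bfC$ is injective. By \Cref{Eff}, $F^0H_\bfC=H_\bfC$, so the quotient in question is $H_\bfC/F^1H_\bfC$. The kernel of the composite is $(W_0H)_\bfC\cap F^1H_\bfC$, which by \Cref{sub_Hodge} is $F^1(W_0H)_\bfC$. Also by \Cref{Eff}, $W_0H$ is pure of type $(0,0)$, hence $F^1(W_0H)_\bfC=0$. This gives injectivity.

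Next we identify the image. By the displayed formula in the notation preceding \Cref{Deligne_Lemma}, and using $F^0H_\bfC=H_\bfC$,
$$
W_0\Gr_F^0H_\bfC=\frac{(W_0H)_\bfC+F^1H_\bfC}{F^1H_\bfC}.
$$
This is exactly the image of $(W_0H)_\bfC$ under the quotient map.

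Functoriality is formal. A morphism of mixed Hodge structures $f\colon H\to H'$ preserves $W_0$ and $F^1$, so the square formed by $f_\bfC|_{(W_0H)_\bfC}$ and $\Gr_F^0(f_\bfC)$ commutes with the two quotient maps.

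For the Hodge-Tate case, it suffices to show $W_0\Gr_F^0H_\bfC=\Gr_F^0H_\bfC$, i.e.\ $(W_0H)_\bfC+F^1H_\bfC=H_\bfC$. We compare dimensions. Since $\Gr_F^0$ is exact and $\Gr_F^0(W_0H)_\bfC\cong (W_0H)_\bfC$, we have
$$
\dim\Gr_F^0H_\bfC=\sum_m\dim\Gr_F^0(\Gr_m^WH)_\bfC=\sum_q h^{0,q}(H).
$$
If $H$ is Hodge-Tate, then $h^{0,q}=0$ for $q\neq0$. Hence $\dim\Gr_F^0H_\bfC=h^{0,0}=\dim W_0H$, the last equality because $W_0H=\Gr_0^WH$ is pure of type $(0,0)$. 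An injection between spaces of equal finite dimension is an isomorphism.

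The only mildly delicate step is the dimension identity. It uses exactness of $\Gr_F^0$ applied to the weight filtration, from \Cref{Deligne_Lemma}, so that $\dim\Gr_F^0H_\bfC$ equals the sum of the dimensions of $\Gr_F^0$ of the weight quotients.
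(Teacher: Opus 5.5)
Your proof is correct and follows the paper's argument. Injectivity comes from $F^1(W_0H)_\bfC=0$, the image from the definition of the induced weight filtration, and the Hodge-Tate case from exactness of $\Gr_F^0$ together with the vanishing of $h^{0,q}(H)$ for $q\neq 0$. The only cosmetic difference is in the surjectivity step: the paper shows $\Gr_F^0(H/W_0H)_\bfC=0$ and gets surjectivity from exactness on $0\to W_0H\to H\to H/W_0H\to 0$, whereas you count dimensions and use the injectivity you already proved.
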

\begin{proof}
By \Cref{Eff}, we have $\Gr_F^0H_\bfC=H_\bfC/F^1H_\bfC$, and $W_0H$ is of type $(0,0)$.
Hence $(W_0H)_\bfC\cap F^1H_\bfC=F^1(W_0H)_\bfC=0$ and the map is injective. 
Its image is $W_0\Gr_F^0H_\bfC$ by the definition of the induced weight filtration.

If $H$ is Hodge-Tate, the nonzero weight quotients of $V=H/W_0H$ have type $(r,r)$ with $r>0$, so their zeroth Hodge quotients vanish. 
Exactness of $\Gr_F^0$, applied successively to the weight filtration of $V$, gives $\Gr_F^0V_\bfC=0$. Now applying it to $0\to W_0H\to H\to V\to0$ proves surjectivity.
\end{proof}

To apply this observation to schemes, we need to identify the zeroth Hodge quotient with sheaf cohomology. 
The classical Hodge decomposition did this for smooth projective schemes \Cref{classical_Hodge}; the Du Bois complex will provide the extension we need.

\begin{remark} For a smooth scheme $X$, the de Rham complex is
$
\Omega_X^\bullet=[\OO_X\xrightarrow{d}\Omega_X^1\xrightarrow{d}\Omega_X^2\to\cdots],
$
with $\OO_X$ in degree zero. Its filtration by form degree is $F^p\Omega_X^\bullet=\Omega_X^{\geq p}$, and thus its zeroth graded piece is $\OO_X$.
\end{remark}

More generally, for every reduced scheme $X$, one can construct a filtered complex $(\underline\Omega_X^\bullet,F)$ called the \textbf{Du Bois complex}; this agrees with the de Rham complex when $X$ is smooth \cite{DuBois}. 
Its zeroth graded piece $\underline\Omega_X^0:=F^0\underline\Omega_X^\bullet/F^1\underline\Omega_X^\bullet$ is, in general, a complex of $\OO_X$-modules, and there is a natural morphism
\begin{equation}\label{struc_map}
\OO_X\longrightarrow \underline\Omega_X^0.
\end{equation}

\begin{definition}\label{def:dubois}
A reduced scheme $X$ has \textbf{Du Bois singularities} if \eqref{struc_map} is a quasi-isomorphism.
\end{definition}

Smooth schemes are Du Bois because $\underline\Omega_X^0\cong\OO_X$ and schemes with rational singularities are Du Bois \cite[Corollary 5.5]{KovacsSchwedeSurvey}. 
We will see in the next section that Stanley-Reisner schemes are also Du Bois (\Cref{DuBois}). 
We do not need the general construction of the Du Bois complex; instead, we describe in detail a special case that applies to Stanley-Reisner schemes.

\begin{example}\label{smooth_intersect}
Let $X$ be a reduced scheme and assume there exists a collection $\{X_i\}_i$ of closed subschemes such that  $X=X_1\cup\cdots\cup X_s$ and the scheme-theoretic intersections
$
X_A:=\bigcap_{a\in A}X_a
$
are smooth for every non-empty $A \subseteq \{1,\dots,s\}$.

Let $i_A:X_A\hookrightarrow X$ be the inclusion. By \cite[Section 4.1 and Proposition 4.10]{DuBois}, the Du Bois complex is represented by the total complex of
$$
\mathscr D^{r,q} = \bigoplus_{|A|=r+1}(i_A)_*\Omega_{X_A}^q
$$
for $r,q \geq 0$.
The horizontal maps $\delta$ are alternating sums of restrictions to intersections (c.f. \Cref{interesection_complex_cc}), and the vertical maps are the de Rham differentials $d$.
Thus the total complex has
$
\operatorname{Tot}(\mathscr D)^n = \bigoplus_{r+q=n}\mathscr D^{r,q},
$
with differential $\delta+(-1)^r d$ on $\mathscr D^{r,q}$.
The filtration $F^p$ consists of forms of degree at least $p$.
Taking the zeroth graded piece retains only the row $q=0$,
so
$$
\underline\Omega_X^0
\cong
\left[
\bigoplus_{|A|=1}(i_A)_*\OO_{X_A} \xrightarrow{\delta}
\bigoplus_{|A|=2}(i_A)_*\OO_{X_A} \xrightarrow{\delta}
\bigoplus_{|A|=3}(i_A)_*\OO_{X_A} \longrightarrow\cdots
\right].
$$
The natural morphism $\OO_X\to\underline\Omega_X^0$ is the augmentation $f\mapsto(f|_{X_a})_a$.
In particular, $X$ is Du Bois if and only if the augmented complex
$$
0\longrightarrow\OO_X \longrightarrow \bigoplus_{|A|=1}(i_A)_*\OO_{X_A} \longrightarrow \bigoplus_{|A|=2}(i_A)_*\OO_{X_A} \longrightarrow\cdots
$$
is exact.
\end{example}

\begin{prop}[{\cite[Theorem 4.5(ii)-(iii)]{DuBois}}] \label{non_proper_map}
If $X$ is Du Bois, there is a functorial map
$$
\alpha_X:H^i(X,\bfC)\longrightarrow H^i(X,\OO_X).
$$
If $X$ is projective, this map factors through the Hodge quotient and induces a functorial isomorphism
\begin{equation}\label{eq:db-proper}
\Gr_F^0H^i(X,\bfC)\xrightarrow{\sim}H^i(X,\OO_X).
\end{equation}
\end{prop}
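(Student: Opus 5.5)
The plan is to realize $\alpha_X$ as the map induced on hypercohomology by a composite of morphisms of complexes
$$
\bfC \longrightarrow \underline\Omega_X^\bullet \longrightarrow \Gr_F^0\underline\Omega_X^\bullet=\underline\Omega_X^0 \xleftarrow{\ \sim\ } \OO_X,
$$
where the last arrow is the quasi-isomorphism \eqref{struc_map}. This arrow is invertible precisely because $X$ is Du Bois. The work is to make the first arrow meaningful for non-proper $X$ in a way that is natural in $X$.

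\textbf{Step 1: the map from singular cohomology.} I would choose a cubical (or simplicial) hyperresolution $\epsilon:X_\bullet\to X$ by smooth schemes. By definition, $\underline\Omega_X^\bullet=R\epsilon_*\Omega_{X_\bullet}^\bullet$ with the filtration induced by $\Omega_{X_\bullet}^{\geq p}$, and $\underline\Omega_X^0=R\epsilon_*\OO_{X_\bullet}$.
\begin{enumerate}
\item On each smooth $X_k$, Grothendieck's algebraic de Rham theorem combined with \eqref{constant_comp} gives $H^i(X_k,\bfC)\cong\mathbb H^i(X_k,\Omega_{X_k}^\bullet)$, with hypercohomology taken in the Zariski topology. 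This isomorphism is functorial in $X_k$.
\item The projection $\Omega_{X_k}^\bullet\to\OO_{X_k}$ onto the zeroth graded piece then gives a map $H^i(X_k,\bfC)\to H^i(X_k,\OO_{X_k})$.
\item Cohomological descent for $\bfC$ identifies $H^i(X,\bfC)$ with $H^i(X_\bullet,\bfC)$. Assembling the maps of item (ii) over the hyperresolution yields
$$
H^i(X,\bfC)\cong \mathbb H^i(X,\underline\Omega_X^\bullet)\longrightarrow \mathbb H^i(X,\underline\Omega_X^0)\cong H^i(X,\OO_X),
$$
where the final isomorphism uses the Du Bois hypothesis.
\end{enumerate}
Functoriality follows because hyperresolutions can be chosen compatibly over morphisms, and because the Du Bois complex is well defined up to filtered quasi-isomorphism.

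\textbf{Step 2: the projective case.} I would use Du Bois's result, obtained from Deligne's theory applied to proper hyperresolutions, that for proper $X$:
\begin{enumerate}
\item the spectral sequence of the filtered complex $(\underline\Omega_X^\bullet,F)$ degenerates at $E_1$;
\item the filtration it induces on $H^i(X,\bfC)=\mathbb H^i(X,\underline\Omega_X^\bullet)$ is Deligne's Hodge filtration.
\end{enumerate}
Degeneration means that $F^1H^i$ is exactly the kernel of $H^i\to\mathbb H^i(\Gr_F^0\underline\Omega_X^\bullet)$, and that this map is surjective. So the map of Step 1 factors through an isomorphism
$$
\Gr_F^0H^i(X,\bfC)\xrightarrow{\sim}\mathbb H^i(X,\underline\Omega_X^0)\cong H^i(X,\OO_X).
$$
GAGA is used to compare the analytic and Zariski computations; these agree because $X$ is projective.

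\textbf{Main obstacle.} The expected difficulty is the $E_1$-degeneration and its identification with Deligne's filtration. This rests on the degeneration of the Hodge-to-de Rham spectral sequence for smooth proper varieties, together with Deligne's mixed Hodge theory of simplicial schemes. Here I would simply invoke \cite{DeligneIII} and \cite{DuBois}; everything else is formal bookkeeping of functoriality.
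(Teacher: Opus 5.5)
Your proposal is correct and takes essentially the same route as the paper, which gives no independent argument but cites Du Bois's Theorem 4.5(ii)--(iii). The paper's subsequent example describes $\alpha_X$ exactly as your composite: the comparison isomorphism $H^i(X,\bfC)\cong\mathbb H^i(X,\underline\Omega_X^\bullet)$, then projection onto the zeroth graded piece, then the inverse of the augmentation quasi-isomorphism; as you say, the projective case rests on $E_1$-degeneration and its identification with Deligne's Hodge filtration.
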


\begin{example} 
Assume that the scheme $X$ in \Cref{smooth_intersect} is Du Bois, or equivalently, that the augmented complex of structure sheaves is exact.
Consider the projection
$
\pi:\operatorname{Tot}(\mathscr D) \longrightarrow\mathscr D^{\bullet,0}
$
that is the identity on forms of degree zero and is zero on higher degree forms. 
Then $\alpha_X$ is the composition
$$
H^i(X,\bfC) \xrightarrow{\sim} \mathbb H^i(X,\operatorname{Tot}(\mathscr D))
\xrightarrow{\mathbb H^i(\pi)} \mathbb H^i(X,\mathscr D^{\bullet,0})
\xrightarrow{\sim} H^i(X,\OO_X).
$$
The first map is the comparison isomorphism in \cite[Theorem 4.5(ii)]{DuBois}, and the last is the inverse of the isomorphism induced by the augmentation.
\end{example}

\section{Stanley-Reisner schemes}
In this section we compute the weight-graded pieces of the cohomology of Stanley-Reisner schemes in terms of their component intersections. 
In particular, we show that their zeroth Hodge quotient is precisely the complexification of their weight-zero part.

\begin{definition} Let $\Delta$ be a simplicial complex on $[n]=\{1,\ldots,n\}$, with facets $\sigma_1,\ldots,\sigma_s$. 
For $\tau \subseteq [n]$, we write $\boldsymbol{x}^\tau=\prod_{j\in\tau}x_j$.
We denote the \textbf{Stanley-Reisner ideal} associated to $\Delta$ by $I_{\Delta} = (\boldsymbol{x}^{\tau}: \tau \notin \Delta) \subseteq S$ and the \textbf{Stanley-Reisner scheme} by  $Y_{\Delta} := \Proj(S/I_{\Delta})$. 
\end{definition}

\begin{notation} For a simplicial complex $\Delta$ we always use  $\sigma_1,\dots,\sigma_s$ to denote its facets and write $\Delta = \langle \sigma_1,\dots,\sigma_s \rangle$. 

For any nonempty subset $A \subseteq [s]:=\{1,\dots,s\}$ define $ \sigma_A = \bigcap_{a \in A} \sigma_a$ and $Y_A  = V(x_j: j\notin \sigma_A) \subseteq \PP^{n-1}$.
The schemes $Y_{\{a\}}$ are the irreducible components of $Y_\Delta$, and $Y_A=\bigcap_{a\in A}Y_{\{a\}}$. 
If $\sigma_A\ne\emptyset$, then $Y_A\cong\PP^{|\sigma_A|-1}$; otherwise $Y_A$ is empty.
\end{notation}

\begin{remark} \label{push}
Let $i_A:Y_A\hookrightarrow Y_\Delta$ be the inclusion, and use the same symbol for its analytification.  
Since this is a closed immersion, the comparison map \eqref{constant_comp} gives
$$
H^q(Y_\Delta^{\mathrm{an}},(i_A)_*\QQ_{Y_A^{\mathrm{an}}}) =H^q(Y_A^{\mathrm{an}},\QQ_{Y_A^{\mathrm{an}}}) \cong H^q(Y_A,\QQ).
$$
\end{remark}

\begin{example} \label{Hodge_plane}
For non-empty $Y_A$, its cohomology ring is
$$
H^{\bullet}(Y_A,\QQ) = \QQ[h_A]/(h_A^{|\sigma_{A}|})
$$
with $h_A = c_1(\OO_{Y_{A}}(1)) \in H^2(Y_A,\QQ)$ of Hodge type $(1,1)$. 
In particular, the only non-zero groups are  $H^{2r}(Y_A,\QQ)= \QQ(-r)  $ with $ 0 \leq r < |\sigma_A|$.

It follows that for any $A \subseteq B$, the pullback along $Y_B \hookrightarrow Y_A$ sends $h^r_A$ to $h_B^r$ whenever the latter exists, and to zero otherwise. 
In particular, after fixing the basis, the nonzero restriction maps are just the identity.
\end{example}

The higher nerves \cite{HigherNerves} record which component intersections contribute in each cohomological degree.

\begin{definition} Let $\Delta$ be a simplicial complex with facets $\sigma_1,\dots,\sigma_s$. 
For $j\geq 1$, the $j$-th \textbf{higher nerve} $N_j(\Delta) $ is the simplicial complex whose nonempty faces are $\{A \subseteq [s]: |\sigma_A| \geq j \}$.
\end{definition}

\begin{remark} The nerve $N_1(\Delta)$ records the nonempty intersections of the components of $Y_\Delta$. More generally, for $r \geq 0$, $N_{r+1}(\Delta)$ records those intersections of dimension at least $r$. 
Since each nonempty intersection $Y_A$ is a projective space, this is equivalent to having $H^{2r}(Y_A,\mathbb Q)\ne 0$.
\end{remark}

\begin{definition} \label{interesection_complex_cc}
The \textbf{intersection complex} $\mathscr C_{\Delta}^\bullet$ on $Y_{\Delta}^\text{an}$ is
$$
\begin{aligned}
0
\longrightarrow\bigoplus_{|A|=1} (i_A)_{\star}\QQ_{Y_A^{\text{an}}}
\longrightarrow\bigoplus_{|A|=2}(i_A)_{\star}\QQ_{Y_A^{\text{an}}}
\longrightarrow\cdots.
\end{aligned}
$$
The first direct sum is in degree $0$.

For $A\subseteq B$, let $\rho_{A,B}:(i_A)_{\star}\QQ_{Y_A^{\text{an}}} \to (i_B)_{\star}\QQ_{Y_B^{\text{an}}}$ denote the natural restriction map. 
The differentials of the complex are alternating sums of these maps; given $B=\{b_0<\cdots<b_{p+1}\}$ and a section $\boldsymbol{u}=(u_A)_{|A|=p+1}$ on any open set, we define
$$
(\delta \boldsymbol{u})_B =\sum_{i=0}^{p+1}(-1)^i \rho_{B-\{b_i\},B}(u_{B-\{b_i\}}).
$$
\end{definition}

\newcommand{\bfu}{\boldsymbol{u}}

It turns out that this complex resolves the constant sheaf on $Y_{\Delta}$.

\begin{lemma}\label{complexes_exact}
The natural map
$\mathbb Q_{Y_\Delta^{\mathrm{an}}}
\longrightarrow \mathscr C_\Delta^\bullet$
is a quasi-isomorphism. Moreover, the analogous complex
\begin{equation}\label{comp_res}
0 \longrightarrow \mathscr O_{Y_\Delta} \longrightarrow
\bigoplus_{|A|=1}(i_A)_*\mathscr O_{Y_A}
\longrightarrow \bigoplus_{|A|=2}(i_A)_*\mathscr O_{Y_A}
\longrightarrow \cdots
\end{equation}
is exact, where the differentials are alternating sums of
restriction maps.
\end{lemma}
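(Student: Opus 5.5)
Exactness is a local statement, so I will check it stalkwise. For the constant sheaf this means at each point $y\in Y_\Delta^{\mathrm{an}}$; for \eqref{comp_res} it suffices to check on the standard affine charts $D_+(x_k)$, or equivalently after passing to the affine cone and working with graded modules over $S$. The key observation is combinatorial: the intersection pattern of the $Y_A$ near a point is governed by a simplicial complex with a cone point, and the resulting alternating-sum complexes are then contractible.

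\textbf{Constant sheaf.} Fix $y\in Y_\Delta$ and put $\tau=\{j: x_j(y)\neq 0\}$, a nonempty face of $\Delta$. Then $y\in Y_A$ if and only if $\tau\subseteq\sigma_A$, if and only if $A\subseteq T:=\{a:\tau\subseteq\sigma_a\}$, and $T\ne\emptyset$. The stalk $((i_A)_*\QQ)_y$ is $\QQ$ if $A\subseteq T$ and $0$ otherwise, because $i_A$ is a closed immersion. Hence the stalk of the augmented complex $0\to\QQ\to\mathscr C_\Delta^\bullet$ at $y$ is the augmented cochain complex of the full simplex on $T$ with $\QQ$ coefficients. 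Each $\rho_{A,B}$ is the identity on stalks, so the differential is the standard simplicial coboundary. Since a simplex is contractible, this complex is exact; explicitly, choosing $t\in T$ gives the contracting homotopy $(h\boldsymbol u)_A=u_{A\cup\{t\}}$ with the appropriate sign. This proves the quasi-isomorphism.

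\textbf{Structure sheaves.} I will work with the graded $S$-module complex
$$0\to S/I_\Delta\to\bigoplus_{|A|=1}S/P_A\to\bigoplus_{|A|=2}S/P_A\to\cdots,\qquad P_A=(x_j:j\notin\sigma_A),$$
and show it is exact in each multidegree $\mathbf a\in\mathbb N^n$; exactness of \eqref{comp_res} then follows by sheafifying. For a monomial $\boldsymbol x^{\mathbf a}$ with support $\tau$, we have $\boldsymbol x^{\mathbf a}\notin P_A$ exactly when $\tau\subseteq\sigma_A$, and $\boldsymbol x^{\mathbf a}\notin I_\Delta$ exactly when $\tau\in\Delta$.

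\begin{itemize}
\item If $\tau\in\Delta$, the degree-$\mathbf a$ strand is again the augmented cochain complex of the full simplex on $T=\{a:\tau\subseteq\sigma_a\}$, which is nonempty, so it is exact as above.
\item If $\tau\notin\Delta$, then $T$ is empty and every term in that degree vanishes.
\end{itemize}

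This includes $\tau=\emptyset$ (degree $0$), where $T=[s]$, so the argument covers it too. Since the localization $D_+(x_k)$ is exact, the sheafified complex is exact. Alternatively, one can run the same stalk argument directly on the charts, using that each restriction map is surjective.

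\textbf{Main obstacle.} The delicate point is the bookkeeping identifying each stalk or graded strand with the simplex cochain complex, including the compatibility of signs in $\delta$. A second subtlety is that passing from graded modules to sheaves requires only exactness in high degrees, so the possible failure of exactness at the irrelevant ideal causes no problem.
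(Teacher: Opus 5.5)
Your proof is correct. For the structure sheaves it is the paper's own argument: you check exactness of $0\to S/I_\Delta\to\bigoplus_{|A|=1}S/\mathfrak p_A\to\cdots$ in each multidegree. Each strand is either zero or the augmented cochain complex of the full simplex on $T=\{a:\mathrm{supp}\subseteq\sigma_a\}$, and you then sheafify.

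Two small remarks on that part. First, the terms with $\sigma_A=\emptyset$ are $S/\mathfrak m=\bfk$ in degree $0$, which sheafify to $0$ and so match $(i_A)_*\OO_{Y_A}=0$. Second, as you yourself observe, the graded complex is exact in every multidegree, including degree $0$, so your concern about exactness only in high degrees is moot.

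The only real difference is the first assertion. The paper just cites cohomological descent for the closed covering $Y_\Delta=\bigcup_a Y_{\{a\}}$ \cite[Example 5.3.7]{DeligneIII}. You instead verify it directly: with $\tau=\{j:x_j(y)\neq 0\}$, you identify the stalk of the augmented complex at $y$ with the augmented cochain complex of the full simplex on the nonempty set $\{a:\tau\subseteq\sigma_a\}$. This is the same combinatorics as the algebraic part, and it makes the lemma self-contained at no extra cost.
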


\begin{proof}
The first assertion follows from
\cite[Example 5.3.7]{DeligneIII}. To see the second, let $\mathfrak{p}_A=(x_j:j\notin\sigma_A)$ and consider the complex
$$
0 \longrightarrow S/I_\Delta \longrightarrow \bigoplus_{|A|=1}S/\mathfrak{p}_A \longrightarrow \bigoplus_{|A|=2}S/\mathfrak{p}_A \longrightarrow \cdots.
$$
For each multidegree $\bfu\in\mathbb N^n$, let $\mathrm{supp}(\boldsymbol{u}) = \{j:u_j \ne 0 \}$. 
Then note that $(S/\mathfrak{p}_A)_{\bfu} \ne 0$ iff $\mathrm{supp}(\bfu) \subseteq \sigma_A$. 
In particular, in multidegree $\bfu$, the complex is either zero or the augmented cochain complex of the full simplex on $\{a:\mathrm{supp}(\bfu)\subseteq\sigma_a\}$.
Thus, the complex and its sheafification are exact.
\end{proof}

\begin{remark} \label{DuBois}
Every nonempty $Y_A$ is a projective space. By \Cref{smooth_intersect}, the complex \eqref{comp_res}, with its initial term $\OO_{Y_\Delta}$
omitted, represents $\underline{\Omega}_{Y_\Delta}^0$.
The exactness of the augmented complex \eqref{comp_res} shows that the natural map $\OO_{Y_\Delta}\to\underline{\Omega}_{Y_\Delta}^0$ is a quasi-isomorphism; in particular, $Y_\Delta$ has Du Bois singularities.
\end{remark}

We now use this resolution to compute the weight-graded pieces. We regard simplicial cohomology as a pure Hodge structure of type $(0,0)$.

\begin{prop}\label{Nerve_Gr} Let $I \subseteq S$ be a square-free monomial ideal and $Y_{\Delta}$ the associated Stanley-Reisner scheme. Then for all $i,r \geq 0$ we have
$$
\Gr_{2r}^W H^i(Y_{\Delta},\QQ)
\cong H^{i-2r}(N_{r+1}(\Delta),\QQ)
       \otimes_\QQ\QQ(-r) \quad
\text{ and } \quad
\Gr_{2r+1}^W H^i(Y_{\Delta},\QQ)=0.
$$\end{prop}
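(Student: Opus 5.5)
The plan is to compute $H^i(Y_\Delta,\QQ)$ via the intersection complex $\mathscr C^\bullet_\Delta$ of \Cref{complexes_exact} and the associated Mayer--Vietoris (descent) spectral sequence. Filtering $\mathscr C^\bullet_\Delta$ by the column index $p$ (the stupid filtration) gives a spectral sequence
$$
E_1^{p,q}=\bigoplus_{|A|=p+1} H^q(Y_A,\QQ)\Longrightarrow H^{p+q}(Y_\Delta,\QQ).
$$
This uses \Cref{push} to identify the cohomology of $(i_A)_*\QQ$ with $H^q(Y_A,\QQ)$, and the first assertion of \Cref{complexes_exact} to identify the abutment with $H^{p+q}(Y_\Delta,\QQ)$. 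This is exactly Deligne's construction for a proper semi-simplicial resolution by smooth projective varieties (\cite[Example 5.3.7]{DeligneIII}). In that setting the $E_1$ differentials are morphisms of pure Hodge structures, the spectral sequence degenerates at $E_2$, and the weight filtration on the abutment is the one induced by $q$, so that
$$
\Gr^W_q H^{p+q}(Y_\Delta,\QQ)\cong E_2^{p,q}.
$$
I will take this identification as the input from mixed Hodge theory. If one wants to avoid quoting it, degeneration can be checked directly: the $d_r$ for $r\ge2$ map pure Hodge structures of weight $q$ to weight $q-r+1\neq q$ and therefore vanish by \Cref{cor:strictness-consequences}.

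Next I compute the $E_1$ page using \Cref{Hodge_plane}. For odd $q$ every $H^q(Y_A,\QQ)$ vanishes, so $E_1^{p,q}=0$. This already gives $\Gr^W_{2r+1}H^i=0$.

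For $q=2r$, the group $H^{2r}(Y_A,\QQ)$ is $\QQ(-r)$, spanned by $h_A^r$, when $|\sigma_A|\ge r+1$, and is zero otherwise. The condition $|\sigma_A|\ge r+1$ says precisely that $A$ is a face of $N_{r+1}(\Delta)$. Hence
$$
E_1^{\bullet,2r}\cong \bigoplus_{A\in N_{r+1}(\Delta),\,|A|=\bullet+1}\QQ\cdot h_A^r\otimes\QQ(-r).
$$
The differential is the alternating sum of the restrictions $\rho_{A,B}$. By \Cref{Hodge_plane} each restriction sends $h_A^r\mapsto h_B^r$ whenever $B$ is still a face of $N_{r+1}(\Delta)$ (that is, when $h_B^r\neq0$). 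So under the bases $\{h_A^r\}$ the row $E_1^{\bullet,2r}$ is exactly the simplicial cochain complex of $N_{r+1}(\Delta)$, tensored with $\QQ(-r)$, with the same sign conventions as in \Cref{interesection_complex_cc}. Its cohomology in degree $p$ is therefore $H^{p}(N_{r+1}(\Delta),\QQ)\otimes\QQ(-r)$, with simplicial (non-reduced) cohomology, since $p=0$ corresponds to vertices. Setting $p=i-2r$ yields the claim.

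The main obstacle is justifying that the weight filtration on $H^i(Y_\Delta,\QQ)$ is the filtration induced by this spectral sequence and that the sequence degenerates at $E_2$. The combinatorial identification of the rows is routine bookkeeping. I would handle the obstacle by noting that $\{Y_{\{a\}}\}$ gives a cubical/semi-simplicial hyperresolution whose terms $\coprod_{|A|=p+1}Y_A$ are smooth projective, and then invoking Deligne's theorem that for such resolutions the weight spectral sequence is the descent spectral sequence, with $W$ induced by the filtration on $q$ \cite{DeligneIII}. A minor point to check is that the empty $Y_A$ contribute nothing, consistent with these $A$ not being faces of $N_1(\Delta)$.
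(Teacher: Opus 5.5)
Your proposal is correct and follows essentially the same route as the paper: the spectral sequence of the intersection complex, vanishing of the odd rows, identification of the even rows with the simplicial cochain complexes of the higher nerves via \Cref{Hodge_plane}, and degeneration at $E_2$ because the differentials $d_\ell$ with $\ell\geq 2$ connect different weights. The only minor difference is that you quote Deligne's descent theory for the identification $W_mH^i=L^{i-m}H^i$ of the weight filtration with the spectral-sequence filtration $L$. The paper instead only quotes that $L$ is a filtration by mixed Hodge substructures. It then deduces $W_m = L^{i-m}$ from the purity of the graded pieces together with strictness.
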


\begin{proof}
The constant-sheaf resolution gives the spectral sequence
\begin{equation*} \label{E1}
E_1^{p,q} =H^q(Y_\Delta^{\mathrm{an}},\mathscr C_\Delta^p) \cong\bigoplus_{|A|=p+1}H^q(Y_A,\QQ)
\quad\Longrightarrow\quad H^{p+q}(Y_\Delta,\QQ).
\end{equation*}
The first expression is sheaf cohomology, while the second and the abutment are singular cohomology, identified by \Cref{push} and \eqref{constant_comp}. 
By \cite[Theorem 8.1.15 and Proposition 8.3.5]{DeligneIII}, this is a spectral sequence of mixed Hodge structures, with an abutment filtration by mixed Hodge substructures. 

By \Cref{Hodge_plane}, the odd rows vanish,  and the row $q=2r$ is the simplicial cochain complex of $N_{r+1}(\Delta)$, with each copy of $\QQ$ replaced by $\QQ(-r)$. Therefore
\begin{equation}\label{E2}
E_2^{p,2r}=H^p(N_{r+1}(\Delta),\QQ)\otimes_\QQ\QQ(-r),
\qquad E_2^{p,2r+1}=0.
\end{equation}
Now every $E_2^{p,q}$ is pure of weight $q$. The differential $d_2$ has target of weight $q-1$, so it is zero. Hence $E_3=E_2$. Inductively, if $E_\ell=E_2$, the source and target of
$$
d_\ell:E_\ell^{p,q}\longrightarrow E_\ell^{p+\ell,q-\ell+1}
$$
have different weights for $\ell\geq2$; strictness makes this differential zero as well. This proves $E_2=E_\infty$.

Let $L^\bullet$ be the decreasing filtration
on $H^i(Y_\Delta,\QQ)$ induced by the spectral sequence; this is a filtration by mixed Hodge substructures.
The graded piece
$$
\Gr_L^pH^i(Y_\Delta,\QQ) \cong E_\infty^{p,i-p}=E_2^{p,i-p}
$$
is pure of weight $i-p$. It follows that the successive
quotients of $L^{i-m}H^i(Y_\Delta,\QQ)$ have weights at most $m$, while those of
$H^i(Y_\Delta,\QQ)/L^{i-m}H^i(Y_\Delta,\QQ)$ have weights greater than $m$. 
It follows from \Cref{Deligne_Lemma} that  $W_mH^i(Y_\Delta,\QQ)=L^{i-m}H^i(Y_\Delta,\QQ)$ and thus 
$$
\Gr_m^WH^i(Y_\Delta,\QQ) \cong E_2^{i-m,m}.
$$
Using \eqref{E2}, we obtain the desired result.
\end{proof}

\begin{cor}\label{Hodge_Tate_fiber}
The scheme $Y_\Delta$ is Du Bois, and $H^i(Y_\Delta,\QQ)$ is of Hodge-Tate type. Moreover,
$$
W_0H^i(Y_\Delta,\QQ)\cong H^i(\Delta,\QQ)\otimes_\QQ\QQ(0),
$$
and the natural comparison restricts to an isomorphism
\begin{equation}\label{eq:sr-zero-comparison}
(W_0H^i(Y_\Delta,\QQ))_\bfC\xrightarrow{\sim}H^i(Y_\Delta,\OO_{Y_\Delta}).
\end{equation}
\end{cor}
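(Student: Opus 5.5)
The Du Bois assertion was already recorded in \Cref{DuBois}, so I will concentrate on the Hodge-theoretic statements, all of which should come from \Cref{Nerve_Gr}. That proposition says the odd weight quotients of $H^i(Y_\Delta,\QQ)$ vanish. It also says each $\Gr_{2r}^W H^i(Y_\Delta,\QQ)$ is a direct sum of copies of $\QQ(-r)$. Together these are exactly the definition of Hodge-Tate type in \Cref{Hodge_Tate_Def}.

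For the weight-zero part, I take $r=0$ in \Cref{Nerve_Gr}. By \Cref{Deligne_big_thm} the structure is effective, so \Cref{Eff} gives $W_{-1}=0$ and hence $W_0H^i=\Gr_0^WH^i\cong H^i(N_1(\Delta),\QQ)\otimes\QQ(0)$. It then remains to identify $H^\bullet(N_1(\Delta))$ with $H^\bullet(\Delta)$. The faces of $N_1(\Delta)$ are the sets $A\subseteq[s]$ with $\sigma_A\neq\emptyset$, so $N_1(\Delta)$ is the nerve of the cover of $\Delta$ by its facets. Each nonempty intersection $\sigma_A$ is a simplex and hence contractible, so the nerve lemma gives a homotopy equivalence $|N_1(\Delta)|\simeq|\Delta|$ and the desired isomorphism on rational cohomology. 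Here I read $H^i(\Delta,\QQ)$ as the unreduced cohomology of the geometric realization. This matches $N_1$, since the $E_1$ row $q=0$ is the unaugmented cochain complex.

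For the comparison \eqref{eq:sr-zero-comparison}, I combine three facts. First, $Y_\Delta$ is Du Bois and projective, so \Cref{non_proper_map} gives $\Gr_F^0H^i(Y_\Delta,\bfC)\xrightarrow{\sim}H^i(Y_\Delta,\OO_{Y_\Delta})$. Second, since the structure is effective and Hodge-Tate, \Cref{Hodge_Tate_inclusion} says the quotient map $(W_0H)_\bfC\to\Gr_F^0H_\bfC$ is an isomorphism. Composing these gives the claimed isomorphism. Third, this composite is the restriction of the natural map $\alpha_{Y_\Delta}$ to $(W_0H)_\bfC$, because by \Cref{non_proper_map} $\alpha$ factors through $\Gr_F^0$.

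The only step I expect to need care is the nerve-lemma identification, specifically making sure reduced and unreduced cohomology are not confused in degree $0$. As a sanity check I would also compare directly: row $q=0$ computes $H^i(N_1(\Delta))$, and by \Cref{complexes_exact} the analogous complex of structure sheaves computes $H^i(\OO_{Y_\Delta})$, since each $H^j(\OO_{\PP^k})$ is $\bfC$ in degree $0$ and vanishes otherwise. This gives the dimension match independently. Everything else follows formally from results already stated.
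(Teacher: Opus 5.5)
Your proposal is correct and follows the paper's proof essentially step for step: Du Bois from \Cref{DuBois}, Hodge-Tate from \Cref{Nerve_Gr}, $W_{-1}=0$ via effectivity and \Cref{Eff}, and then \Cref{Hodge_Tate_inclusion} composed with the Du Bois comparison of \Cref{non_proper_map}. The only difference is that you prove $N_1(\Delta)\simeq\Delta$ directly, using the nerve lemma for the cover of $|\Delta|$ by its closed facets, whereas the paper cites \cite[Theorem 3.3]{HigherNerves}, which in the case $j=1$ is exactly that statement.
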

\begin{proof} 
Stanley-Reisner schemes are Du Bois by \Cref{DuBois} (also see \cite[proof of Corollary 4.2]{MustataPopa}). 
By \Cref{Nerve_Gr}, every even weight quotient is a direct sum of $\QQ(-r)$ and every odd quotient is zero; this is exactly the Hodge-Tate condition.
By \Cref{Deligne_big_thm} and \Cref{Eff}, $W_{-1}H^i(Y_\Delta,\QQ) = 0$ and thus, by \Cref{Nerve_Gr}, we have 
$$
W_0H^i(Y_{\Delta},\QQ) = \Gr_0^W H^i(Y_\Delta,\QQ) \cong H^i(N_1(\Delta),\QQ) \otimes_{\QQ}\QQ(0)
$$
By \cite[Theorem 3.3]{HigherNerves}, $N_1(\Delta)$ and $\Delta$ are homotopy equivalent.
Hence for all $i\geq 0$ we have
$$
H^i(N_1(\Delta),\QQ) \otimes_{\QQ}\QQ(0) \cong H^i(\Delta,\QQ) \otimes_{\QQ}\QQ(0).
$$
Finally, \Cref{Hodge_Tate_inclusion} identifies the complexification of this weight-zero piece with $\Gr_F^0H^i(Y_\Delta,\bfC)$, and the Du Bois comparison in \Cref{non_proper_map} identifies this quotient with $H^i(Y_\Delta,\OO_{Y_\Delta})$.
\end{proof}

\section{Degenerations and Hodge structures}
In this section we construct a morphism of mixed Hodge structures from the special fiber to the general fiber, under the assumption that the family is trivial away from $0$. By controlling the singularities of the total space, we prove our main results.

We begin by briefly describing mixed Hodge structures on relative singular cohomology.
For a closed subscheme $Z\subseteq X$, we use $H^i(X,Z;\QQ)$ to denote the singular cohomology of the pair $(X^{\mathrm{an}},Z^{\mathrm{an}})$.
These groups carry natural mixed Hodge structures, functorial with respect to morphisms of pairs, and the long exact sequence
$$
\cdots\longrightarrow H^i(X,Z;\QQ)
\longrightarrow H^i(X,\QQ)
\longrightarrow H^i(Z,\QQ)
\longrightarrow H^{i+1}(X,Z;\QQ)
\longrightarrow\cdots
$$
is an exact sequence of mixed Hodge structures
\cite[Example 8.3.8 and Proposition 8.3.9]{DeligneIII}.

\begin{prop} \label{functoriality} Let $f:\XX \to \bfA^1$ be a proper morphism such that $\XX|_{\GG_m} \cong \XX_1 \times \GG_m$ as $\GG_m$-schemes. Let $\iota_0: \XX_0 \to \XX$ and $\iota_1: \XX_1 \to \XX$ be the inclusions of the two fibers. For every $i$, the restriction is an isomorphism of mixed Hodge structures
$$
\iota_0^{\star}: H^i(\XX,\QQ) \overset{\cong}{\longrightarrow} H^i(\XX_0,\QQ) 
$$
This induces a morphism of mixed Hodge structures $s_i: H^i(\XX_0,\QQ) \to H^i(\XX_1,\QQ)$ where $s_i = (\iota_1^{\star})(\iota^{\star}_0)^{-1}$.
\end{prop}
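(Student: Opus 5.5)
The plan is to show that $\XX_0^{\mathrm{an}}$ is a deformation retract of a neighborhood, and that this neighborhood can be taken to be all of $\XX^{\mathrm{an}}$ by flowing along the $\GG_m$-action. Because $f$ is proper and $\XX_0 \subseteq \XX$ is a closed algebraic subvariety, triangulation results (Łojasiewicz; Hironaka's triangulation of semi-algebraic or real-analytic pairs) give an open neighborhood $U$ of $\XX_0^{\mathrm{an}}$ that deformation retracts onto it. Properness also gives $\epsilon>0$ such that $f^{-1}(\Delta_\epsilon)\subseteq U$, and the standard argument shows that restriction $H^i(f^{-1}(\Delta_\epsilon),\QQ)\to H^i(\XX_0,\QQ)$ is an isomorphism for $\epsilon$ small; this is the usual retraction onto the central fiber of a proper family over a disk.

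The $\GG_m$-equivariance identifies $f^{-1}(\Delta_\epsilon)$ with the whole space. First I will make the hypothesis precise: the $\GG_m$-action on $\XX|_{\GG_m}\cong \XX_1\times\GG_m$ is by multiplication on the second factor, compatible with $f$ and the scaling action on $\bfA^1$. I will assume it extends to all of $\XX$ with $f$ equivariant, which is the case for Gröbner degenerations since the weight vector gives the action. Scaling by $t\in\mathbb R_{>0}\subseteq \GG_m(\bfC)$ then defines a continuous map $\XX^{\mathrm{an}}\times[0,1]\to\XX^{\mathrm{an}}$, starting at the identity and sending $\XX^{\mathrm{an}}$ into $f^{-1}(\Delta_\epsilon)$ at time $1$, after choosing it as scaling by $\epsilon/R$ on $f^{-1}(\Delta_R)$. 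Doing this for every $R$ shows that the inclusion $f^{-1}(\Delta_\epsilon)\hookrightarrow f^{-1}(\Delta_R)$ is a homotopy equivalence: the scaling map is a homotopy inverse, with the homotopy running through scaling by $s\in[\epsilon/R,1]$. Since singular cohomology of $\XX^{\mathrm{an}}$ is the colimit over $R$, restriction $H^i(\XX,\QQ)\to H^i(\XX_0,\QQ)$ is a bijection.

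For a cleaner route without the limit, I will use the homotopy $\XX^{\mathrm{an}}\times[0,1]\to\XX^{\mathrm{an}}$, $(x,t)\mapsto t\cdot x$, which extends continuously to $t=0$ when the action extends to a morphism $\bfA^1\times\XX\to\XX$. It does extend, because the limit exists by properness over $\bfA^1$. This deformation retracts $\XX^{\mathrm{an}}$ onto a subspace of $\XX_0^{\mathrm{an}}$. Combined with the retraction of the neighborhood, it makes $\iota_0^\star$ bijective.

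Once bijectivity is known, the Hodge-theoretic part is formal. Pullback along the algebraic morphism $\iota_0$ is a morphism of mixed Hodge structures by the functoriality in \Cref{Deligne_big_thm}, and a bijective morphism is an isomorphism by \Cref{cor:strictness-consequences}. Then $s_i=\iota_1^\star(\iota_0^\star)^{-1}$ is a composite of morphisms of mixed Hodge structures. The main obstacle is the topological step, namely justifying the retraction onto $\XX_0^{\mathrm{an}}$ for a possibly singular total space. There I would rely on the existence of a triangulation of $f$ near $0$, or simply on the tubular neighborhood retraction for proper maps to a disk as in Clemens–Schmid.
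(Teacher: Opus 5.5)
\paragraph{A gap in the topological step.} Your final step is exactly the paper's: pullback along $\iota_0$ is a morphism of mixed Hodge structures, and a bijective one is an isomorphism by \Cref{cor:strictness-consequences}. The topological step, however, proves the proposition only under a hypothesis it does not contain.

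The statement gives only an isomorphism $\XX|_{\GG_m}\cong\XX_1\times\GG_m$ over $\GG_m$. Even if you read it as equivariant for scaling on the second factor, nothing says that this action extends to $\XX$. Your first route uses exactly that extension: scaling by real $s$ must be defined and continuous on all of $f^{-1}(\Delta_R)$, including $\XX_0^{\mathrm{an}}$, to make $f^{-1}(\Delta_\epsilon)\hookrightarrow f^{-1}(\Delta_R)$ a homotopy equivalence.

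The extension fails for general isotrivial degenerations. Take $\XX_1$ a smooth projective curve and blow up $\XX_1\times\bfA^1$ along the length-two subscheme locally cut out by $(x+t,t^2)$, where $x$ is a local coordinate at a point of $\XX_1$. The result is flat, projective and trivial over $\GG_m$. A lift of $t\mapsto\lambda t$, however, would give an isomorphism over $\XX_1\times\bfA^1$ between the blowups of $V(x+t,t^2)$ and $V(x+\lambda t,t^2)$. No such isomorphism exists for $\lambda\neq1$, since the two ideals have different Rees valuations. This case matters: the paper applies \Cref{functoriality}, through \Cref{DuBois_Gr_isom}, to arbitrary isotrivial degenerations in \Cref{isotrivial_positive}.

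Your ``cleaner route'' is wrong even when the action does extend. Pointwise existence of $\lim_{t\to0}t\cdot x$ does not make $\bfA^1\times\XX\to\XX$ continuous. For example, let $\XX=V(xz-t^2y^2)\subseteq\PP^2\times\bfA^1$ with $\lambda\cdot([x:y:z],t)=([\lambda x:y:\lambda z],\lambda t)$. Every point with $y\neq0$ flows to $[0:1:0]$, while $[1:0:0]$ and $[0:0:1]$ are fixed. So the time-zero map is discontinuous, and there is no such deformation retraction.

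\paragraph{How to repair it, and the paper's route.} No action is needed. Grant the local fact that $H^i(f^{-1}(\Delta_\epsilon);\QQ)\to H^i(\XX_0;\QQ)$ is an isomorphism for small $\epsilon$. Then excise the closed set $f^{-1}(\overline{\Delta}_{\epsilon/2})$ from the open set $f^{-1}(\Delta_\epsilon)$, using only the trivialization:
\[
H^i(\XX,f^{-1}(\Delta_\epsilon);\QQ)\cong H^i\bigl(\XX_1\times\{|t|>\epsilon/2\},\,\XX_1\times\{\epsilon/2<|t|<\epsilon\};\QQ\bigr)=0.
\]
The right side vanishes because the annulus includes into $\{|t|>\epsilon/2\}$ by a homotopy equivalence. (Incidentally, the cohomology of an increasing union is an inverse limit with a $\lim^1$ term, not a colimit; this is harmless once the transition maps are isomorphisms.)

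The paper avoids both the neighborhood retraction and any action. It takes $\Gamma$ to be the closure of the graph of the trivialization in $\XX\times_{\bfA^1}(\XX_1\times\bfA^1)$. The projections $p$ and $q$ are proper and are isomorphisms over $\GG_m$, so proper excision gives $H^i(\XX,\XX_0;\QQ)\cong H^i(\Gamma,\Gamma_0;\QQ)\cong H^i(\XX_1\times\bfA^1,\XX_1\times\{0\};\QQ)=0$. The long exact sequence of the pair $(\XX,\XX_0)$ then makes $\iota_0^\star$ bijective.

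The paper's route needs no triangulation or local conical structure for the possibly singular total space, and it works verbatim for any proper $f$ that is trivial over $\GG_m$. Your repaired route is more hands-on, but it depends on that standard yet nontrivial local retraction near $\XX_0$.
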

\begin{proof}
Let $\Gamma$ be the closure of the graph of the
trivialization
$\XX|_{\GG_m}\cong\XX_1\times\GG_m$
inside the product
$\XX\times_{\bfA^1}(\XX_1\times\bfA^1)$.
We have a diagram
$$
\begin{tikzcd}[column sep=large, row sep=large]
\Gamma \arrow[r,"q"] \arrow[d,"p"'] &
\XX_1\times\bfA^1 \arrow[d,]\\
\XX \arrow[r,"f"'] & \bfA^1
\end{tikzcd}
$$
in which $p$ and $q$ are proper, and are isomorphisms over $\GG_m$. 
Let $\Gamma_0=p^{-1}(\XX_0) =q^{-1}(\XX_1\times\{0\})$.
By excision for relative cohomology (see \cite[Section 8.3.10]{DeligneIII}), we have isomorphisms of mixed Hodge structures 
$$
p^{\star}: H^i(\XX,\XX_0;\QQ) \overset{\cong}{\longrightarrow}  H^i(\Gamma,\Gamma_0;\QQ)
\, \text{ and } \,
q^{\star}:H^i(\XX_1\times\bfA^1,\XX_1\times\{0\};\QQ) \overset{\cong}{\longrightarrow}  H^i(\Gamma,\Gamma_0;\QQ).
$$

The long exact sequence in relative singular cohomology gives
$$
\begin{aligned}
H^{i-1}(\XX_1\times\bfA^1,\QQ) 
\longrightarrow H^{i-1}(\XX_1,\QQ)
\longrightarrow H^i(\XX_1\times\bfA^1,\XX_1;\QQ)
\longrightarrow H^i(\XX_1\times\bfA^1,\QQ)
\longrightarrow H^i(\XX_1,\QQ)
\end{aligned}
$$
where we identify $\XX_1 = \XX_1 \times \{0\}$.
The outer two maps are isomorphisms since the inclusion $\XX_1(\bfC) \times \{0\} \hookrightarrow \XX_1(\bfC) \times \bfC$ is a homotopy equivalence.
Thus, we must have $H^i(\XX_1\times\bfA^1,\XX_1\times \{0\};\QQ) = 0$. 
Since $p^\star$ and $q^\star$ are isomorphisms, this gives $H^i(\XX,\XX_0;\QQ)=0$ for every $i$.
Now applying the long exact sequence
to the pair $(\XX,\XX_0)$, we obtain
$$
0 = H^i(\XX,\XX_0;\QQ) \longrightarrow H^i(\XX,\QQ) \xrightarrow{\iota_0^\star}H^i(\XX_0,\QQ) \longrightarrow H^{i+1}(\XX,\XX_0;\QQ) =0 .
$$
Thus $\iota_0^{\star}$ is an isomorphism of $\QQ$-vector spaces. By functoriality, $\iota_0^{\star}$ is a morphism of mixed Hodge structures, and hence an isomorphism by \Cref{cor:strictness-consequences}.
Composing its inverse with the natural morphism $\iota_1^{\star}:H^i(\XX,\QQ) \to H^i(\XX_1,\QQ)$ gives the desired result.
\end{proof}

\begin{prop} \label{DuBois_Gr_isom} 
Let $f:\XX \to \bfA^1$ be a flat projective morphism such that $\XX|_{\GG_m} \cong \XX_1 \times \GG_m$ over $\GG_m$.  
If $\XX_0$ is Du Bois, then $\XX_1$ is Du Bois and, for every $i$, the map
$$
\Gr_F^0(s_i): \Gr_F^0H^i(\XX_0, \bfC) \xrightarrow{\sim} \Gr_F^0H^i(\XX_1,\bfC)
$$
is an isomorphism of vector spaces with their induced weight filtrations.
\end{prop}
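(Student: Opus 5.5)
The plan is to compare both sides with coherent cohomology through the Du Bois comparison maps $\alpha$ of \Cref{non_proper_map}, applied to the total space $\XX$ as well as to the two fibers.

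\textbf{Step 1: $\XX_1$ and $\XX$ are Du Bois.} First I will show that $\XX_1$ and the total space $\XX$ are Du Bois. The fiber $\XX_0$ is a Cartier divisor, cut out by $t$, in a flat family, and it is Du Bois. By the theorem of Kov\'acs--Schwede on deformations of Du Bois singularities, $\XX$ is then Du Bois in a neighborhood of $\XX_0$. Away from $\XX_0$, $\XX$ is isomorphic to $\XX_1\times\GG_m$. The Du Bois locus is open and $\GG_m$-stable, and every $\GG_m$-orbit closure meets $\XX_0$, so $\XX_1$ is Du Bois, and hence so is all of $\XX$.

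\textbf{Step 2: local freeness and base change.} The same Kov\'acs--Schwede results show that each $R^if_*\OO_\XX$ is locally free near $0$ and commutes with base change. Using the $\GG_m$-equivariance, this holds everywhere, and $E:=H^0(\bfA^1,R^if_*\OO_\XX)=H^i(\XX,\OO_\XX)$ is a graded free $\bfC[t]$-module. Consequently $\dim H^i(\XX_0,\OO)=\dim H^i(\XX_1,\OO)$. By \eqref{eq:db-proper} this gives $\dim \Gr_F^0H^i(\XX_0,\bfC)=\dim\Gr_F^0H^i(\XX_1,\bfC)$, so it suffices to prove that $\Gr_F^0(s_i)$ is surjective.

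\textbf{Step 3: a commuting diagram.} By functoriality of $\alpha$ we have
$$\alpha_{\XX_j}\circ\iota_j^\star=\mathrm{res}_j\circ\alpha_\XX, \qquad j=0,1,$$
where $\mathrm{res}_j:H^i(\XX,\OO_\XX)\to H^i(\XX_j,\OO_{\XX_j})$ is restriction. Let $V=\alpha_\XX(H^i(\XX,\bfC))\subseteq E$. Since $\iota_0^\star$ is an isomorphism (\Cref{functoriality}) and $\alpha_{\XX_0}$ is surjective (it factors through the isomorphism \eqref{eq:db-proper}), the image $\mathrm{res}_0(V)$ is all of $H^i(\XX_0,\OO)=E/tE$.

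\textbf{Step 4: equivariance and Nakayama.} The connected group $\GG_m$ acts trivially on $H^i(\XX,\bfC)$, and $\alpha_\XX$ is equivariant. Hence the finite-dimensional subspace $V$ is fixed by $\GG_m$, so it is spanned by homogeneous elements of $E$. Choose homogeneous elements of $V$ whose images form a basis of $E/tE$. By graded Nakayama they form a $\bfC[t]$-basis of the free module $E$, so their values at $t=1$ form a basis of $H^i(\XX_1,\OO)$. Therefore $\mathrm{res}_1(V)=H^i(\XX_1,\OO)$. Rewriting with Step 3,
$$\alpha_{\XX_1}\circ s_i=\alpha_{\XX_1}\circ\iota_1^\star\circ(\iota_0^\star)^{-1}$$
is surjective. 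Since $\alpha_{\XX_1}$ induces the isomorphism \eqref{eq:db-proper}, $\Gr_F^0(s_i)$ is surjective, and by the dimension count of Step 2 it is bijective.

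\textbf{Step 5: weight filtrations.} Since $s_i$ is a morphism of mixed Hodge structures, \Cref{Deligne_Lemma} shows that the bijective map $\Gr_F^0(s_i)$ is an isomorphism of vector spaces with their induced weight filtrations.

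\textbf{Main obstacle.} I expect Steps 1--2 to be the hardest part. They require a careful citation of the Kov\'acs--Schwede results on Du Bois deformation and on local freeness and base change for $R^if_*\OO$. They also require checking that \Cref{non_proper_map}, in its non-proper form, is functorial and $\GG_m$-equivariant for the non-proper total space $\XX$.
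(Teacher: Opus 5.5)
Your Steps 1--3 and 5 closely track the paper's proof. Step 4, however, and the word ``graded'' in Step 2, rely on a $\GG_m$-action on the whole total space $\XX$ lifting $t\mapsto\lambda t$ on $\bfA^1$, and the hypotheses provide no such action.

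The proposition only assumes an isomorphism $\XX|_{\GG_m}\cong\XX_1\times\GG_m$ over $\GG_m$. The scaling action on the right-hand side need not extend across $\XX_0$. For example, take $\XX_1=\PP^1$, blow up $\PP^1\times\bfA^1$ at a point of the special fiber, and then blow up a point of the exceptional curve not fixed by the scaling. Such an action does exist for Gr\"obner degenerations, but the proposition is also used for arbitrary isotrivial degenerations in \Cref{isotrivial_positive}. Without the action, $E=H^i(\XX,\OO_\XX)$ is not a graded $\bfC[t]$-module and there is no equivariance of $\alpha_\XX$ to exploit. So your graded-Nakayama mechanism for carrying surjectivity from $t=0$ to $t=1$ is unavailable.

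Your other uses of $\GG_m$ are harmless. In Step 1 the action exists on $\XX|_{\GG_m}$, and orbit closures meet $\XX_0$ by properness of $f$. More directly, Kov\'acs--Schwede already give Du Bois fibers near $0$, each isomorphic to $\XX_1$. Over $\GG_m$, local freeness and base change for $R^if_*\OO_\XX$ follow from the product structure.

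What is missing is a way to pass from $0$ to $1$ using only the trivialization. The paper does it as follows:
\begin{enumerate}
\item By your Step 3, the map $\Phi:H^i(\XX,\bfC)\otimes_\bfC\bfC[t]\to E$, $v\otimes g\mapsto g\,\alpha_\XX(v)$, is surjective modulo $t$.
\item By ordinary Nakayama, $\Phi$ is surjective over a neighborhood $U$ of $0$. Hence $\alpha_{\XX_a}\circ\iota_a^\star$ is surjective for $a\in U\setminus\{0\}$.
\item Let $\phi_a:\XX_1\xrightarrow{\sim}\XX_a$ be the isomorphism given by the trivialization. The maps $\iota_a\circ\phi_a$ and $\iota_1$ are homotopic (move along a path from $a$ to $1$ in $\bfC^\ast$), so $\phi_a^\star\circ\iota_a^\star=\iota_1^\star$ on $H^i(\XX,\bfC)$.
\item Functoriality of $\alpha$ for $\phi_a$ gives $\alpha_{\XX_1}\circ\iota_1^\star=\phi_a^\star\circ\alpha_{\XX_a}\circ\iota_a^\star$, which is surjective.
\end{enumerate}
With this in place of Step 4, the rest of your argument goes through.

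Alternatively, you can salvage your equivariance idea by running it only over $\GG_m$. The image of $H^i(\XX,\bfC)$ in $H^i(\XX|_{\GG_m},\OO)\cong H^i(\XX_1,\OO_{\XX_1})\otimes_\bfC\bfC[t^{\pm1}]$ consists of $t$-constant classes. Generic surjectivity of $\Phi$ then forces surjectivity at $t=1$.
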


\begin{proof}
By \cite[Theorem 4.1 and Corollary 4.2]{KovacsSchwede}, $\XX$ is Du Bois near $\XX_0$, and the fibers over an
open neighborhood of $0$ are Du Bois. 
By choosing a non-zero point in this neighborhood, the product trivialization implies that $\XX_1$, and hence every nonzero fiber, is Du Bois.
Since Du Bois singularities are preserved under smooth base change, $\XX|_{\GG_m}$ is also Du Bois.
Together with the neighborhood of $\XX_0$, this shows that the total space $\XX$ is Du Bois.

By  \cite[Theorem 4.6]{DuBois} and \Cref{non_proper_map}, we also get
$$
\dim_\bfC \Gr^0_F H^i(\XX_1,\bfC) = \dim_\bfC H^i(\XX_1,\OO_{\XX_1}) 
= \dim_\bfC H^i(\XX_0,\OO_{\XX_0}) = \dim_\bfC \Gr^0_F H^i(\XX_0,\bfC). 
$$
Since the dimensions are the same, it suffices to show that $\Gr^0_F(s_i)$ is surjective. 
By \Cref{functoriality} it suffices to prove that $\Gr^0_F(\iota_1^{\star})$ is surjective. 

Fix $i\geq0$ and let $V=H^i(\XX,\bfC)$ and $M=H^i(\XX,\OO_{\XX})$.
Since $f$ is projective over an affine base, $M$ is a finitely generated $\bfC[t]$-module. 
Since all the fibers of $f$ are Du Bois, \cite[Theorem 4.6]{DuBois} implies that $M$ is locally free and  $M/(t-a)M\cong H^i(\XX_a,\OO_{\XX_a})$ for all $a \in \bfC$.
By \Cref{non_proper_map}, we have a natural map $\alpha_{\XX}:V\to M$, which extends to a $\bfC[t]$-linear map $\Phi:V\otimes_\bfC\bfC[t]\to M$ given by  $v\otimes g\mapsto g\alpha_\XX(v)$.
For each $a$, functoriality and base change give the diagram
$$
\begin{tikzcd}[column sep=large,row sep=large]
V \arrow[r,"\iota_a^\star"]
  \arrow[d,"\alpha_{\XX}"'] &
H^i(\XX_a,\bfC) \arrow[d,"\alpha_{\XX_a}"]\\
M \arrow[r,"\mathrm{res}_a"'] &
M/(t-a)M \cong H^i(\XX_a,\OO_{\XX_a}).
\end{tikzcd}
$$
By \Cref{functoriality}, $\iota_0^\star$ is an isomorphism. Since $\XX_0$ is projective and Du Bois, \Cref{non_proper_map} implies that $\alpha_{\XX_0}$ is surjective. 
This implies that $\Phi$ is surjective modulo $(t)$.
Since the cokernel of $\Phi$ is finitely generated, Nakayama's lemma implies that $\Phi$ is surjective in an open neighborhood $U$ of $0$.
For any $a \in U\setminus \{0\}$, the commutative diagram now shows that $\alpha_{\XX_a}\circ\iota_a^\star$ is surjective. 
Under the product trivialization, the inclusions of the fibers over $a$ and $1$ are homotopic. 
Their pullbacks on singular cohomology therefore agree after identifying the fibers. 
Functoriality of $\alpha$ now shows that $\alpha_{\XX_1}\circ\iota_1^\star$ is surjective.
By \Cref{non_proper_map}, $\alpha_{\XX_1}$ induces an isomorphism
$$
\Gr_F^0H^i(\XX_1,\bfC) \cong H^i(\XX_1,\OO_{\XX_1}).
$$
Thus, $\Gr_F^0(\iota_1^\star)$ is surjective as required.
Finally, $\Gr_F^0(s_i)$ is strict for the induced weight filtrations by \Cref{Deligne_Lemma}, so its inverse preserves these filtrations.
\end{proof}

We are now ready to prove our main results.

\begin{proof}[Proof of \Cref{zero_stable}]
By \Cref{Hodge_Tate_fiber},  $\XX_0$ is Du Bois and thus \Cref{DuBois_Gr_isom} proves that $\Gr_F^0(s_i)$ is an isomorphism. 
Applying  \Cref{Hodge_Tate_inclusion} to this isomorphism we obtain
$$
(W_0H^i(\XX_1,\QQ))_{\bfC}  \cong W_0\Gr_F^0H^i(\XX_1,\bfC) \cong  W_0\Gr_F^0H^i(\XX_0,\bfC) \cong (W_0H^i(\XX_0,\QQ))_{\bfC}
$$ 
By functoriality, the complexification of $W_0(s_i)$ is an isomorphism. Hence $W_0(s_i)$ is an isomorphism over $\QQ$.
\end{proof}

\begin{proof}[Proof of \Cref{Herzog}] 
Let $\XX \to \bfA^1$ be a square-free Gr\"obner degeneration with $X = \XX_1$.
We first show that we may assume $\bfk = \bfC$.
Indeed, let $\bfk_0$ be the subfield of $\bfk$, generated over $\QQ$, by the coefficients of a Gr\"obner basis defining $\XX_1$. 
In particular, the Gr\"obner degeneration $\XX \to \bfA^1$ is then defined over $\bfk_0$.
Since $\bfk_0$ is finitely generated over $\QQ$, we may choose an embedding $\bfk_0 \hookrightarrow \bfC$.
Applying flat base change to a resolution, we see that rational singularities are unchanged under field extensions in characteristic $0$ \cite[Tag 02KH]{stacks_project}.
Thus, the model of $\XX_1$ over $\bfk_0$ and its base change to $\bfC$ have rational singularities.
By flat base change, it therefore suffices to prove the required vanishing over $\bfC$; it then holds over $\bfk_0$, and hence over $\bfk$.

Fix $i>0$ and note that by \Cref{Hodge_Tate_fiber} and  \Cref{DuBois_Gr_isom} we have the isomorphisms
$$
W_0 \Gr_F^0 H^i(\XX_0,\bfC)  =  \Gr_F^0H^i(\XX_0,\bfC) \xrightarrow{\sim}\Gr_F^0H^i(\XX_1,\bfC)
$$
compatible with the induced weight filtrations.  In particular, $\Gr_F^0 H^i(\XX_1,\bfC)$ is entirely of weight zero.

Choose a projective resolution of singularities $\pi:\widetilde{\XX_1} \to \XX_1$. 
By functoriality \cite[Section 3.21]{DuBois}, we have a commutative
diagram
$$
\begin{tikzcd}[column sep=large,row sep=large]
\Gr_F^0H^i(\XX_1,\bfC)
\arrow[r,"\Gr_F^0(\pi^*)"]
\arrow[d,']&
\Gr_F^0H^i(\widetilde{\XX_1},\bfC)
\arrow[d,]\\
H^i(\XX_1,\OO_{\XX_1})
\arrow[r,"\pi^*"']&
H^i(\widetilde{\XX_1},\OO_{\widetilde{\XX_1}}).
\end{tikzcd}
$$
The vertical maps are isomorphisms by \Cref{non_proper_map}, while the bottom map is an isomorphism since $\XX_1$ has rational singularities. 
Hence the top map is an isomorphism.
It preserves the induced weight filtrations because the underlying map on singular cohomology
is a morphism of mixed Hodge structures.
In particular,  $\Gr_F^0H^i(\widetilde{\XX_1},\bfC)$ must also be entirely of weight zero. 
However, since $\widetilde{\XX_1}$ is smooth and projective, \Cref{Deligne_big_thm} implies that  $H^i(\widetilde{\XX_1},\QQ)$ is pure of weight $i$, and thus the induced weight filtration on $\Gr_F^0H^i(\widetilde{\XX_1},\bfC)$ is concentrated in weight $i$. 
Since $i>0$, this implies that $\Gr_F^0H^i(\widetilde{\XX_1},\bfC)=0$, and chasing the diagram we obtain $H^i(\XX_1,\OO_{\XX_1})=0$, as required.
\end{proof}

\begin{proof}[Proof of \Cref{singularity_theorem}]
Let $\mfm = (x_1,\dots,x_n)$ denote the maximal ideal of $S$,  $X = \Proj(S/I)$ and $d= \dim(S/I)$. 
It is well known that if $S/I$ has rational singularities then so does $X$. 
Now assume that $X$ has rational singularities; in particular, the punctured spectrum $\Spec(S/I) - \{\mfm\}$ has rational singularities. 
If $d=1$, then $X = \Spec(\bfk)$ and $S/I$ is a polynomial ring in one variable. 
Thus, we may assume that $d \geq 2$. 

By \cite[Theorem 13.13]{Miller_Sturmfels} and \cite[Theorem 1.2]{Conca_Varbaro}, we have  $H^i_\mfm(S/I)_j =  H^i_\mfm(S/J)_j = 0$ for all $i$ and $j > 0$. 
Since the punctured spectrum of $S/I$ has rational singularities, $H^i_{\mfm}(S/I)$ has finite length for
every $i<d$ \cite[Remark 2.1]{CDNV}. By \cite[Theorem 1.2]{Conca_Varbaro}, the same holds for $H^i_{\mfm}(S/J)$.
Consequently, we have $H^i_\mfm(S/I)_j = H^i_\mfm(S/J)_j=0$ for $i<d$ and $j \ne0$ \cite[Remark 2.10]{Conca_Varbaro}.
By \Cref{Herzog}, $H^i_{\mfm}(S/I)_0 \cong H^{i-1}(X,\OO_X)=0$ for all $2 \leq i \leq d$. 
Since $I$ is prime, we have  $H^0_{\mfm}(S/I) = 0$ and an exact sequence
$$
0 \longrightarrow (S/I)_0 \longrightarrow H^0(X,\OO_X) \longrightarrow H^1_{\mfm}(S/I)_0 \longrightarrow 0.
$$
By \cite[Remark 1.3]{HTVW}, the first map is an isomorphism and we obtain $H^1_{\mfm}(S/I) = 0$.  
Thus, all local cohomology modules below degree $d$ vanish, and the top local cohomology vanishes in non-negative degrees.
In particular, both $S/I$ and $S/J$ are Cohen-Macaulay with negative $a$-invariant.

Since $S/I$ is Cohen-Macaulay and also normal away from $\mathfrak m$, which has height $d\geq2$, it
satisfies $(S_2)$ and $(R_1)$. In particular, $S/I$ is normal and by \cite[Theorem 5.1]{KSSW09} it follows that $S/I$ has rational singularities.
\end{proof}

We conclude this section by describing a more general setting in which our results hold.

\begin{definition} A flat projective morphism $\XX \to \bfA^1$ is called an \textbf{isotrivial degeneration (over $\bfk$)} iff $\XX|_{\GG_m} \cong \XX_1 \times \GG_m$ over $\GG_m$.
\end{definition}

\begin{definition} An isotrivial degeneration $\XX \to \bfA^1$ is said to be \textbf{square-free} if $\XX_0 \cong \Proj(R/J)$, where $R = \bfk[x_1,\dots,x_n]$ with $\deg(x_i) > 0$ and $J \subseteq R$ is a square-free monomial ideal.
\end{definition}

\begin{thm} \label{isotrivial_positive} Let $\XX \to \bfA^1$ be an isotrivial square-free degeneration. 
If $\bfk = \bfC$, then for all $i\geq 0$, we have canonical isomorphisms
$$
W_0H^i(\XX_0,\mathbb Q)\cong W_0H^i(\XX_1,\mathbb Q), \, \text{ and } \, \Gr_F^0H^i(\XX_0,\mathbb C) \cong \Gr_F^0H^i(\XX_1,\mathbb C).
$$
If $\XX_1$ has rational singularities, then for all $i>0$ we have 
$ 
H^i(\XX_1,\OO_{\XX_1})=0. 
$
\end{thm}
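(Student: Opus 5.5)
The plan is to rerun the proofs of \Cref{zero_stable} and \Cref{Herzog}, checking that the only input specific to Gr\"obner degenerations was the special fiber being a Stanley-Reisner scheme in a standard graded polynomial ring. \Cref{functoriality} and \Cref{DuBois_Gr_isom} already hold for any flat projective morphism trivial over $\GG_m$, so they apply verbatim to an isotrivial degeneration. What remains is an analogue of \Cref{Hodge_Tate_fiber} for $Y=\Proj(R/J)$ with $R$ positively (not necessarily standard) graded and $J$ square-free: we need $Y$ Du Bois and $H^i(Y,\QQ)$ of Hodge-Tate type. Then \Cref{Hodge_Tate_inclusion} gives $(W_0H^i(Y,\QQ))_\bfC\cong\Gr_F^0H^i(Y,\bfC)$.

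To prove this analogue, I would take facets $\sigma_1,\dots,\sigma_s$ of the complex $\Delta$ of $J$ and set $Y_A=\Proj(R/\mathfrak p_A)$. Each nonempty $Y_A$ is now a weighted projective space $\PP(\deg x_j: j\in\sigma_A)$ rather than $\PP^{|\sigma_A|-1}$. The multidegree argument of \Cref{complexes_exact} is purely combinatorial, so it shows that the augmented \v{C}ech-type complex of $R$-modules is exact. Sheafifying via $\Proj$ in the weighted setting (with twists handled degree by degree) then gives exactness of the complex \eqref{comp_res} of structure sheaves. For the constant sheaf, the intersection complex still resolves $\QQ_{Y^{\mathrm{an}}}$, since the $Y_A$ form a closed cover with $Y_A\cap Y_B=Y_{A\cup B}$.

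The main obstacle is that the $Y_A$ are singular, so \Cref{smooth_intersect} does not literally apply. Weighted projective spaces are finite quotients $\PP^{m}/G$ by finite abelian groups, hence have quotient and therefore rational singularities. They are thus Du Bois, and their rational cohomology is $H^\bullet(\PP^m,\QQ)^G=\QQ[h]/(h^{m+1})$ with $h$ of type $(1,1)$. I would apply the cohomological descent formalism of \cite{DuBois} to the cover $\{Y_A\}$: $\underline\Omega^0_Y$ is computed by the complex of $\underline\Omega^0_{Y_A}\simeq\OO_{Y_A}$. With the exactness above, this shows $Y$ is Du Bois.

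For weights, the spectral sequence of \Cref{Nerve_Gr} still has $E_1^{p,2r}$ built from copies of $\QQ(-r)$, with restriction maps identities on the generator powers. So $E_2=E_\infty$ by the same weight argument, and $H^i(Y,\QQ)$ is Hodge-Tate with $W_0H^i\cong H^i(\Delta,\QQ)$. Then $\Gr_F^0(s_i)$ is an isomorphism by \Cref{DuBois_Gr_isom}, and $W_0(s_i)$ is an isomorphism exactly as in the proof of \Cref{zero_stable}. Finally, if $\XX_1$ has rational singularities, the resolution argument in the proof of \Cref{Herzog} applies word for word. It shows that $\Gr_F^0H^i(\XX_1,\bfC)$ is both of weight zero and of weight $i$, hence zero for $i>0$, and \Cref{non_proper_map} gives $H^i(\XX_1,\OO_{\XX_1})=0$.
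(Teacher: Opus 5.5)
Your proposal is correct, but it handles the weighted special fiber differently from the paper. The paper never works with weighted projective strata. It writes the special fiber as $\Proj(R/I)$ and lets $G=\mu_{a_1}\times\cdots\times\mu_{a_n}$ act diagonally on the standard graded ring $S=\bfC[y_1,\dots,y_n]$. The map $x_i\mapsto y_i^{a_i}$ gives $R/I\cong(S/J')^G$, with $J'$ the same square-free ideal in the $y_i$, so $\XX_0\cong Y/G$ for an ordinary Stanley--Reisner scheme $Y$. Du Bois then follows because finite quotients of Du Bois schemes are Du Bois. Moreover $H^i(\XX_0,\QQ)\cong H^i(Y,\QQ)^G=H^i(Y,\QQ)$ as mixed Hodge structures, since $G$ lies in the connected torus acting on $Y$. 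So the Hodge--Tate property is inherited from \Cref{Hodge_Tate_fiber} with no new computation.

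Your route reruns Section 3 with strata $Y_A\cong\PP(a_j:j\in\sigma_A)$. This needs two heavier general inputs that you invoke rather than prove:
\begin{enumerate}
\item descent of $\underline\Omega^0$ along a closed cover with singular members. The paper records this only for smooth intersections (\Cref{smooth_intersect}); the general case follows inductively from the Mayer--Vietoris triangle for $X=X_1\cup X_2$.
\item Deligne's spectral sequence of mixed Hodge structures for a semi-simplicial scheme whose components are singular.
\end{enumerate}
You should also say how the generators are normalized, since $\OO(1)$ is no longer a line bundle. Take $h_A$ to be the restriction of the single class $\frac1L c_1(\OO(L))$ on $\PP(a_1,\dots,a_n)$, with $L$ divisible by every $a_i$. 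It is this global class, and not merely the fact that each restriction $H^{2r}(Y_A,\QQ)\to H^{2r}(Y_B,\QQ)$ is a nonzero scalar, that identifies the $E_1$ rows with the simplicial cochains of $N_{r+1}(\Delta)$.

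In exchange, your argument needs no global quotient presentation. It would apply to any special fiber cut into closed pieces whose intersections are Du Bois with pure, compatibly generated Tate cohomology. The paper's quotient trick is shorter and reuses Section 3 as a black box.

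One small omission: the vanishing statement is meant over an arbitrary field of characteristic $0$. As the paper does, you should first descend the family, its trivialization over $\GG_m$, and the identification of $\XX_0$ to a finitely generated subfield embedded in $\bfC$, and only then run the resolution argument.
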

\begin{proof}
By descending the family, its trivialization over $\GG_m$, and the identification of its special fiber to a finitely generated subfield of $\bfk$, we reduce to $\bfk=\bfC$ as in the proof of \Cref{Herzog}.

We first show that the Stanley-Reisner scheme defined by a square-free monomial ideal is Du Bois for any positive grading. 
Let $R = \bfC[x_1,\dots,x_n]$ with $\deg(x_i) = a_i > 0$ and $S =  \bfC[y_1,\dots,y_n]$, with $\deg(y_i)=1$. 
Let $\mu_{m}$ denote the cyclic group of order $m$ and note that $G=\mu_{a_1} \times \mu_{a_2} \times \cdots \times \mu_{a_n}$ acts on $S$ by scaling. 
Let $I \subseteq R$ be a square-free monomial ideal and let $J\subseteq S$ be the square-free monomial ideal obtained from $I$ by replacing each $x_i$ with $y_i$. 
Then the morphism $R \longrightarrow S$ given by $x_i \mapsto y_i^{a_i}$ induces an isomorphism $R/I \cong (S/J)^G$. 
In particular, if we let $Y = \Proj(S/J)$ and $X = \Proj(R/I)$, we have $X = Y/G$.
Since $Y$ is Du Bois (\Cref{DuBois}), its finite group quotient $X=Y/G$ is also Du Bois \cite[Corollary 5.4]{KovacsSchwedeSurvey}.

By \cite[Chapter~III, Theorem~7.2]{Bredon1972}, pullback along $\pi:Y\to Y/G$ induces an isomorphism
$\pi^\star:H^i(Y/G,\QQ)\xrightarrow{\sim}H^i(Y,\QQ)^G$; this is an isomorphism of mixed Hodge structures \cite[Proposition~8.2.2]{DeligneIII}.
Since $J$ is a monomial ideal, the action of $G$ extends to an action of the torus $\mathbb{T} = (\bfC^{\star})^n$ on $\Proj(S/J)$. 
Since $\mathbb{T}$ is path-connected in the analytic topology, every element of $\mathbb{T}$ acts on $Y^{\mathrm{an}}$ by a map that is homotopic to the identity. 
Therefore, $\mathbb{T}$, and thus $G$, acts trivially on the singular cohomology group $H^i(Y,\QQ)$ \cite[pg. 201]{Hatcher}. In particular, 
$
H^i(X,\QQ)\cong H^i(Y,\QQ)^G \cong H^i(Y,\QQ).
$
In particular, $H^i(X,\QQ)$ is of Hodge-Tate type. 
Thus, the proofs of \Cref{zero_stable} and \Cref{Herzog} therefore apply to any square-free isotrivial degeneration.
\end{proof}

\begin{remark} Once can extend \Cref{isotrivial_positive} to any isotrivial degeneration $\XX \to \bfA^1$ such that $\XX_0$ is defined by a square-free monomial ideal in a positive multigrading (c.f. \cite[Remark 2.5]{Conca_Varbaro}).
\end{remark}

\section{Examples} \label{Examples}

In this section we will compute the Hodge structures along some well-known degenerations to illustrate our results. 
The term orders used below are lexicographic or degree reverse lexicographic, with variables ordered $x_0 > \cdots > x_n$.

We begin with a remark on the effect of normalization on mixed Hodge structures.

\begin{remark} \label{Kunneth}
Let $X$ be integral and $\pi:\widetilde{X}\to  X$ a finite normalization. 
Assume that $\pi$ is an isomorphism away from the closed points $p_1,\dots,p_k$ and let $n_i = |\pi^{-1}(p_i)|$ be the size of the fibers. 
In the analytic topology, we have an exact sequence of sheaves
$$
0 \longrightarrow \QQ_X \to \pi_{\star} \QQ_{\widetilde{X}} \longrightarrow \bigoplus_{i=1}^k \QQ_{p_i}^{\oplus(n_i-1)} \longrightarrow 0
$$
Since $\pi$ is finite, we have $H^i(X,\pi_{\star}\QQ_{\widetilde{X}}) = H^i(\widetilde{X},\QQ_{\widetilde{X}})$ and since $X$ and $\widetilde{X}$ are connected, we have $H^0(X,\QQ_{X}) = H^0(\widetilde{X},\QQ_{\widetilde{X}})$. 
The long exact sequence in sheaf cohomology gives us 
$$
0 \to \bigoplus_{i=1}^k \QQ(0)^{\oplus(n_i-1)} \to H^1(X,\QQ) \to H^1(\widetilde{X},\QQ) \to 0
$$
and $H^i(X,\QQ) \cong H^i(\widetilde{X},\QQ)$ for all $i\geq 2$. 
These maps are compatible with mixed Hodge structures.
\end{remark}

\begin{example} 
Let $\XX \to \bfA^1$ be a lexicographic degeneration of a twisted cubic in $\PP^3$. 
Explicitly, $\XX_1$ is defined by $(x_0x_2-x_1^2,x_0x_3-x_1x_2,x_1x_3-x_2^2)$  and $\XX_0$ is defined by $(x_0x_2,x_0x_3,x_1x_3)$.
The cohomology of the two fibers is as follows.
\begin{center}
\renewcommand{\arraystretch}{1.15}
\begin{tabular}{l|cc}
& $\XX_1$ (twisted cubic) & $\XX_0$\\
\hline
$H^0(-,\QQ)$ & $\QQ(0)$ & $\QQ(0)$\\
$H^1(-,\QQ)$ & $0$ & $0$\\
$H^2(-,\QQ)$ & $\QQ(-1)$ & $\QQ(-1)^{\oplus 3}$\\
\hline
\hline
$W_0H^{>0}$ & $0$ & $0$\\
$\operatorname{Gr}_F^0H^{>0}$ & $0$ & $0$\\
$H^{>0}(-,\OO)$ & $0$ & $0$\\
\end{tabular}
\end{center}
Since $\XX_1\cong\PP^1$, its cohomology is immediate. 
On the other hand, the Stanley-Reisner complex associated to $\XX_0$ is $\Delta=\langle\{0,1\},\{1,2\},\{2,3\}\rangle$. 
The complex and its nerves are as follows 
\medskip
\begin{center}
\begin{tikzpicture}[
    scale=0.9,
    vertex/.style={circle,fill=black,inner sep=1.5pt}
]
% Stanely-Reisner complex Delta
\begin{scope}
    \foreach \i in {0,1,2,3} {
        \coordinate (v\i) at (1.1*\i,0);
    }
    \draw[thick] (v0)--(v1)--(v2)--(v3);
    \foreach \i in {0,1,2,3} {
        \node[vertex,label=below:{$\i$}] at (v\i) {};
    }
    \node at (1.65,-0.95) {$\Delta$};
\end{scope}

% Ordinary nerve N_1(Delta)
\begin{scope}[xshift=5.2cm]
    \coordinate (a) at (0,0);
    \coordinate (b) at (1.3,0);
    \coordinate (c) at (2.6,0);

    \draw[thick] (a)--(b)--(c);

    \node[vertex,label=below:{$\sigma_1$}] at (a) {};
    \node[vertex,label=below:{$\sigma_2$}] at (b) {};
    \node[vertex,label=below:{$\sigma_3$}] at (c) {};

    \node at (1.3,-0.95) {$N_1(\Delta)$};
\end{scope}

% Higher nerve N_2(Delta)
\begin{scope}[xshift=9.8cm]
    \coordinate (a) at (0,0);
    \coordinate (b) at (1.3,0);
    \coordinate (c) at (2.6,0);

    \node[vertex,label=below:{$\sigma_1$}] at (a) {};
    \node[vertex,label=below:{$\sigma_2$}] at (b) {};
    \node[vertex,label=below:{$\sigma_3$}] at (c) {};

    \node at (1.3,-0.95) {$N_2(\Delta)$};
\end{scope}
\end{tikzpicture}
\end{center}
By  \Cref{Nerve_Gr}, its only nonzero cohomology groups are $H^0(\XX_0,\QQ)=\QQ(0)$ and $H^2(\XX_0,\QQ)=\QQ(-1)^{\oplus3}$.
The remaining entries follow from the Hodge-Tate property and \Cref{Hodge_Tate_inclusion}.
\end{example}

Although the cohomology of $\XX_0$  is always of Hodge-Tate type (\Cref{Hodge_Tate_fiber}), this need not hold for the general fiber $\XX_1$. 
A classical example of this phenomenon is a non-rational cubic threefold in $\PP^4$.

\begin{example} \label{Cubic_threefold}
Let $\XX\to \bfA^1$ be a degree reverse lexicographic
degeneration of a smooth cubic threefold in $\PP^4$.
Explicitly, $\XX_1$ is defined by $(x_0x_1x_2+x_3(x_0^2+x_1^2+x_2^2)+x_3^3+x_4^3)$, while
$\XX_0$ is defined by $(x_0x_1x_2)$.  We have the following cohomology table
\begin{center}
\renewcommand{\arraystretch}{1.2}
\begin{tabular}{l|cc}
& $\XX_1$ (cubic threefold)
& $\XX_0$ \\
\hline
$H^0(-,\QQ)$
& $\QQ(0)$ & $\QQ(0)$ \\
$H^1(-,\QQ)$
& $0$ & $0$ \\
$H^2(-,\QQ)$
& $\QQ(-1)$ & $\QQ(-1)$ \\
$H^3(-,\QQ)$
& $V$ & $0$ \\
$H^4(-,\QQ)$
& $\QQ(-2)$ & $\QQ(-2)$ \\
$H^5(-,\QQ)$
& $0$ & $\QQ(-2)$ \\
$H^6(-,\QQ)$
& $\QQ(-3)$ & $\QQ(-3)^3$ \\
\hline\hline
$W_0H^{>0}$ & $0$ & $0$ \\
$\Gr_F^0H^{>0}$ & $0$ & $0$ \\
{$H^{>0}(-,\mathscr{O})$} & $0$ & $0$ \\
\end{tabular}
\end{center}

The Hodge structure on a smooth cubic threefold is well known \cite[pg. 205]{HuybrechtsCubic}. 
In particular, $V = H^3(\XX_1,\QQ)$ is non-zero with $h^{2,1} = h^{1,2} = 5$ and $h^{3,0}=h^{0,3}=0$.  
On the other hand,  the Stanley-Reisner complex associated to $\XX_0$ is
$
\Delta=\langle
\{0,1,3,4\},\{0,2,3,4\},\{1,2,3,4\}
\rangle
$. The nerves are as follows

\begin{center}
\begin{tikzpicture}[
    scale=0.9,
    vertex/.style={circle,fill=black,inner sep=1.5pt}
]
\foreach \k/\name in {
    0/{N_1(\Delta)},
    1/{N_2(\Delta)},
    2/{N_3(\Delta)},
    3/{N_4(\Delta)}
}{
    \begin{scope}[xshift=\k*3.5cm]
        \coordinate (a) at (0,0);
        \coordinate (b) at (2,0);
        \coordinate (c) at (1,1.73);

        \ifnum\k<2
            \fill[black!40] (a)--(b)--(c)--cycle;
        \fi

        \ifnum\k<3
            \draw[thick] (a)--(b)--(c)--cycle;
        \fi

        \node[vertex,label=below:{$\sigma_1$}] at (a) {};
        \node[vertex,label=below:{$\sigma_2$}] at (b) {};
        \node[vertex,label=above:{$\sigma_3$}] at (c) {};

        \node at (1,-0.95) {$\name$};
    \end{scope}
}
\end{tikzpicture}
\end{center}
Once again, the special fiber column follows from \Cref{Nerve_Gr}. 
\end{example}

We now discuss two examples of Gr\"obner degenerations whose general fibers do not have rational singularities. 
These examples show that Cohen-Macaulayness and negativity of the $a$-invariant need not hold without the rational singularities hypothesis in \Cref{singularity_theorem}.

\begin{example} \label{Cycle_Hodge} Let $\XX \to \bfA^1$ be the Gr\"obner degeneration from \cite[Example~3.15]{Conca_Varbaro}. Explicitly, $\XX_1$ is defined by
$
I= (x_1x_4x_5-x_2x_4^2-x_3x_5^2,
x_2x_3-x_4x_5, x_1x_3x_4-x_3^2x_5-x_4^3,
x_1x_2x_5-x_2^2x_4-x_5^3)
$
and $\XX_0$ is defined by $(x_1x_4x_5,x_2x_3,x_1x_3x_4,x_1x_2x_5)$. The cohomology of the two fibers is as follows.
\begin{center}
\renewcommand{\arraystretch}{1.2}
\begin{tabular}{l|cc}
& $\XX_1$ (pinched scroll)
& $\XX_0$\\
\hline
$H^0(-,\QQ)$ & $\QQ(0)$ & $\QQ(0)$ \\
$H^1(-,\QQ)$ & $\QQ(0)$ & $\QQ(0)$ \\
$H^2(-,\QQ)$ & $\QQ(-1)^2$ & $\QQ(-1)$ \\
$H^3(-,\QQ)$ & $0$ & $0$ \\
$H^4(-,\QQ)$ & $\QQ(-2)$ & $\QQ(-2)^4$ \\
\hline
\hline
$W_0H^1$ & $\QQ$ & $\QQ$ \\
$\Gr_F^0H^1$ & $\bfC$ & $\bfC$ \\
{$H^1(-,\mathscr{O})$} & {$\bfC$} & {$\bfC$} \\
\end{tabular}
\end{center}
Geometrically, $\XX_1$ is the projection of a rational normal scroll of type $(2,2)$ from a point on a secant line. One can check that this projection is just the normalization $\pi:\PP^1 \times \PP^1 \to \XX_1$ given by
$$
([u_0:u_1],[v_0:v_1]) \mapsto [u_0^2v_1+ u_1^2v_0:u_0^2v_0:u_1^2v_1:u_0u_1v_1:u_0u_1v_0]. 
$$ 
Note that $\pi$ identifies the points $([1:0],[0:1])$ and $([0:1],[1:0])$ and is an isomorphism elsewhere. The general fiber column now follows from \Cref{Kunneth} and K\"unneth's formula. 

The Stanley-Reisner complex associated to $\XX_0$ is $\Delta = \langle \{1,2,4\}, \{1,3,5\},\{2,4,5\}, \{3,4,5\} \rangle$. The nerves are as follows

\begin{center}
\begin{tikzpicture}[
    scale=0.9,
    vertex/.style={circle,fill=black,inner sep=1.6pt}
]
% N_1: two filled triangles, with cycle
\begin{scope}
    \coordinate (a) at (-1.5,-0.3);
    \coordinate (b) at (1.7,0.1);
    \coordinate (c) at (0.5,2.2);
    \coordinate (d) at (0.3,0.65);

    \fill[black!40] (a)--(c)--(d)--cycle;
    \fill[black!40] (b)--(c)--(d)--cycle;

    \draw[thick] (a)--(b)--(c)--cycle;
    \draw[thick] (a)--(d)--(b);
    \draw[thick] (c)--(d);

    \node[vertex,label=left:{$\sigma_1$}] at (a) {};
    \node[vertex,label=right:{$\sigma_2$}] at (b) {};
    \node[vertex,label=above:{$\sigma_3$}] at (c) {};
    \node[vertex,label=below:{$\sigma_4$}] at (d) {};

    \node at (0,-1.0) {$N_1(\Delta)$};
\end{scope}

% N_2: path 1342
\begin{scope}[shift={(5,0)}]
    \coordinate (a2) at (0,0.65);
    \coordinate (b2) at (0.85,0.65);
    \coordinate (c2) at (1.70,0.65);
    \coordinate (d2) at (2.55,0.65);

    \draw[thick] (a2)--(b2)--(c2)--(d2);

    \node[vertex,label=below:{$\sigma_1$}] at (a2) {};
    \node[vertex,label=below:{$\sigma_3$}] at (b2) {};
    \node[vertex,label=below:{$\sigma_4$}] at (c2) {};
    \node[vertex,label=below:{$\sigma_2$}] at (d2) {};

    \node at (1.275,-1.0) {$N_2(\Delta)$};
\end{scope}

% N_3: four isolated vertices
\begin{scope}[shift={(9.4,0)}]
    \coordinate (a3) at (0,0.65);
    \coordinate (b3) at (0.85,0.65);
    \coordinate (c3) at (1.70,0.65);
    \coordinate (d3) at (2.55,0.65);

    \node[vertex,label=below:{$\sigma_1$}] at (a3) {};
    \node[vertex,label=below:{$\sigma_2$}] at (b3) {};
    \node[vertex,label=below:{$\sigma_3$}] at (c3) {};
    \node[vertex,label=below:{$\sigma_4$}] at (d3) {};

    \node at (1.275,-1.0) {$N_3(\Delta)$};
\end{scope}
\end{tikzpicture}
\end{center}
Here $N_1(\Delta)$ is the complete graph on $[4]$ along with the 2-simplices $\{1,3,4\}$ and $\{2,3,4\}$, so $H^1(N_1(\Delta),\QQ)\cong\QQ$. 
In particular, the nonzero weight-zero class in $H^1(\XX_0,\QQ)$ is preserved under the degeneration by \Cref{zero_stable}.
Since $\XX_1$ is integral but not locally Cohen-Macaulay \cite[Example~3.15]{Conca_Varbaro},
it does not have rational singularities, showing that the hypothesis of \Cref{singularity_theorem} cannot be omitted.
\end{example}

\begin{example}
Let $\XX \to \bfA^1$ be a lexicographic degeneration of a nodal cubic in $\PP^2$. Explicitly, let $\XX_1$ be defined by $(x_0x_1x_2+x_1^3+x_2^3)$
and $\XX_0$ be defined by $(x_0x_1x_2)$. The cohomology of the two fibers is as follows.
\begin{center}
\renewcommand{\arraystretch}{1.2}
\begin{tabular}{l|cc}
& $\XX_1$ (nodal cubic)
& $\XX_0$ \\
\hline
$H^0(-,\mathbb Q)$
& $\mathbb Q(0)$ & $\mathbb Q(0)$ \\
$H^1(-,\mathbb Q)$
& $\mathbb Q(0)$ & $\mathbb Q(0)$ \\
$H^2(-,\mathbb Q)$
& $\mathbb Q(-1)$ & $\mathbb Q(-1)^3$ \\
\hline
\hline
$W_0H^1$
& $\mathbb Q$ & $\mathbb Q$ \\
$\Gr_F^0 H^1$
& $\mathbb C$ & $\mathbb C$ \\
{$H^1(-,\mathscr{O})$} & {$\bfC$} & {$\bfC$} \\
\end{tabular}
\end{center}
The cohomology table of $\XX_1$ follows from applying \Cref{Kunneth} to the standard normalization $\PP^1 \to \XX_1$. 
On the other hand, the Stanley-Reisner complex corresponding to $\XX_0$ is $\Delta = \langle \{0,1\},\{0,2\},\{1,2\}\rangle$. The complex and its nerves are as follows

\begin{center}
\begin{tikzpicture}[
    scale=0.9,
    vertex/.style={circle,fill=black,inner sep=1.6pt}
]
% Delta: boundary of a triangle
\begin{scope}
    \coordinate (a) at (0,0);
    \coordinate (b) at (2,0);
    \coordinate (c) at (1,1.73);

    \draw[thick] (a)--(b)--(c)--cycle;

    \node[vertex,label=below:$0$] at (a) {};
    \node[vertex,label=below:$1$] at (b) {};
    \node[vertex,label=above:$2$] at (c) {};

    \node at (1,-0.9) {$\Delta$};
\end{scope}

% N_1: boundary of a triangle
\begin{scope}[shift={(4,0)}]
    \coordinate (a1) at (0,0);
    \coordinate (b1) at (2,0);
    \coordinate (c1) at (1,1.73);

    \draw[thick] (a1)--(b1)--(c1)--cycle;

    \node[vertex,label=below:{$\sigma_1$}] at (a1) {};
    \node[vertex,label=below:{$\sigma_2$}] at (b1) {};
    \node[vertex,label=above:{$\sigma_3$}] at (c1) {};

    \node at (1,-0.9) {$N_1(\Delta)$};
\end{scope}

% N_2: three isolated vertices
\begin{scope}[shift={(8,0)}]
    \coordinate (a2) at (0,0);
    \coordinate (b2) at (2,0);
    \coordinate (c2) at (1,1.73);

    \node[vertex,label=below:{$\sigma_1$}] at (a2) {};
    \node[vertex,label=below:{$\sigma_2$}] at (b2) {};
    \node[vertex,label=above:{$\sigma_3$}] at (c2) {};

    \node at (1,-0.9) {$N_2(\Delta)$};
\end{scope}
\end{tikzpicture}
\end{center}
In particular,
$
W_0H^1(\XX_0,\QQ) \cong H^1(\Delta,\QQ)\cong\QQ.
$
In contrast to the twisted cubic, $H^1(\XX_1,\QQ)$ is entirely of weight zero, so it survives in $\Gr_F^0$. Although $\XX_1$ is arithmetically Cohen-Macaulay, its $a$-invariant is zero.
\end{example}

Finally, here is an example showing that an arbitrary flat degeneration with reduced fibers does not preserve the weight-zero parts. In particular, the conclusion of \Cref{Herzog} need not hold without the Gr\"obner hypothesis (even if the total space $\XX$ is smooth).

\begin{example} \label{Counterexample_reduced_general}
Consider the flat family $\XX\to\bfA^1$ of plane
cubics defined by
$$
\XX=V\bigl(x_0x_1x_2+
t(x_0^3+x_1^3+x_2^3)\bigr)
\subseteq\PP^2\times\bfA^1.
$$
The special fiber $\XX_0$ is the same as the previous example, while $\XX_1$ is a smooth elliptic curve. The cohomology of the two fibers is as follows.
\begin{center}
\renewcommand{\arraystretch}{1.2}
\begin{tabular}{l|cc}
& $\XX_1$ (elliptic curve) & $\XX_0$\\
\hline
$H^0(-,\QQ)$ & $\QQ(0)$ & $\QQ(0)$\\
$H^1(-,\QQ)$ & $V$ & $\QQ(0)$\\
$H^2(-,\QQ)$ & $\QQ(-1)$ & $\QQ(-1)^3$\\
\hline
\hline
$W_0H^1$ & $0$ & $\QQ(0)$\\
$\operatorname{Gr}_F^0H^1$ & $\bfC$ & $\bfC$\\
$H^1(-,\OO)$ & $\bfC$ & $\bfC$\\
\end{tabular}
\end{center}
The cohomology of $\XX_0$ was already computed in the preceding example. 
Since $\XX_1$ is a smooth elliptic curve, $V:=H^1(\XX_1,\QQ)$ is pure of weight one with
$h^{1,0}=h^{0,1}=1$ \cite[Section 1.3]{HuybrechtsCubic}. In particular,
$W_0H^1(\XX_1,\QQ)=0$ and $W_0H^1(\XX_0,\QQ)\cong\QQ$. Thus, the weight-zero part of $H^1(-,\QQ)$ is not preserved along the degeneration. Since the family  $\XX \to \bfA^1$ has singular fibers over non-zero values (for example, $t=-\frac{1}{3}$), the family is not trivial over $\GG_m$.
\end{example}

\bibliographystyle{amsalpha} 
\bibliography{references}

\end{document}